\documentclass[10pt]{article}
\usepackage{geometry}
\usepackage{amssymb}
\usepackage{amsmath}
\usepackage{amsthm}
\usepackage{mathrsfs}
\usepackage{indentfirst}

\newtheorem{proposition}{Proposition}[section]
\newtheorem{lemma}{Lemma}[section]
\newtheorem{theorem}{Theorem}[section]
\newtheorem{corollary}{Corollary}[section]

\newtheorem{conjecture}{Conjecture}[section]

\newcommand{\dd}{\mathrm{d}}

\title{Shifted second moment of Gaussian Hecke $L$-functions $L(s,\lambda^k)$}
\author{Xin Hang Ji$^{1,2} $\\
	$ ^1$Department of Mathematics, ShangHai Normal University\\
	$ ^2$ Corresponding author. Email:
	\texttt{1000535190@smail.shnu.edu.cn}}

\begin{document}
	\maketitle
	
	\begin{abstract}
		We establish a uniform asymptotic formula for the smoothly weighted
		shifted second moment of the Gaussian angular Hecke $L$-functions
		$L(s,\lambda^k)$. The completed moment is expressed as the sum of four
		explicit main terms, corresponding to the four functional-equation
		swaps, with an error of size $O_{\Phi,\epsilon}(K^{1/2+\epsilon})$.
		After the Archimedean factors are removed, the resulting formula agrees
		with the numerator-only specialization of the four-swap prediction of
		the $L$-functions Ratios Conjecture. The proof transforms the
		off-diagonal contribution into Weyl sums over the roots of
		$r^2\equiv-1\bmod C$, realizes these sums spectrally through incomplete
		Poincar\'e series evaluated at $i$, and separates the Eisenstein and
		Maa\ss{} spectra. The two $v$-type main terms arise respectively from
		the zero frequency and from the combined residues of two moving
		Eisenstein poles, while the cuspidal spectrum is absorbed into the
		square-root error term.
		
		\noindent\textbf{Key words}
		spectral decomposition, Gaussian Hecke $L$-functions, shifted second moments,
		angular Gr\"ossencharacters,
	\end{abstract}
	
\section{Introduction}

The angular distribution of ideals and primes in the Gaussian field is
encoded by a natural family of Hecke characters of infinite order. This
family goes back to Hecke's work on zeta functions associated with
Gr\"ossencharakters and the equidistribution of prime ideals in sectors
\cite{Hecke}. Its $k$-th member is the $4k$-th angular Fourier mode on
$\mathbb Z[i]$; the finite conductor is fixed, while the Archimedean
conductor grows with $k$.

Let $\lambda$ be the Gr\"ossencharakter over the Gaussian field $\mathbb Q(i)$. For any nonzero Gaussian integer $z$, define
$\lambda((z))=z^4/|z|^4$. Since the fourth power of every Gaussian unit is $1$, this definition is independent of the choice of generator of the principal ideal. Let
$\mathrm{Id}^{*}$ denote the set of nonzero integral ideals of $\mathbb Z[i]$, and define
\[
L_k(s)=L(s,\lambda^k)
=\sum_{\mathfrak a\in\mathrm{Id}^{*}}
\frac{\lambda(\mathfrak a)^k}{N(\mathfrak a)^s}.
\]
Throughout, we sum only over positive integers $k$. Define the completed function
\begin{align*}
	\Lambda_k(s)
	&=\pi^{-s+\frac12}\Gamma(2k+s)L_k(s),
	\\
	\Lambda_k^*(s)
	&=\frac{\Lambda_k(s)}{\Gamma(2k+\frac12)}
	=\frac{\pi^{-s+\frac12}\Gamma(2k+s)L_k(s)}
	{\Gamma(2k+\frac12)}.
	\tag*{(1.1)}
\end{align*}
When $k\geq1$, $\Lambda_k^*(s)$ is entire and satisfies
$\Lambda_k^*(\frac12+s)=\Lambda_k^*(\frac12-s)$; see, for example,
\cite[Section 2]{Watt} for the same family and
\cite[Chapter 5]{IwaniecKowalski} for the general theory of Hecke
$L$-functions. The analytic conductor at the central point is of order
$k^2$. Thus the family considered here is a sparse CM family in the
Archimedean-conductor aspect, with one $L$-function for each positive $k$.

Earlier work provides several chronological benchmarks. In 1985 Sarnak
proved a sharp hybrid fourth-moment bound in the angular and spectral
parameters \cite{Sarnak}. In 2002 Conrey and Soundararajan evaluated a
shifted mollified second moment of quadratic Dirichlet $L$-functions and
showed that at least $20\%$ of the characters $\chi_{-8d}$ have no real
zero on $[0,1]$ \cite{ConreySoundararajan}. In 2012 Hough obtained a
four-term asymptotic for the harmonic twisted second moment of level-one
holomorphic cusp-form $L$-functions in the weight aspect, using the
Petersson trace formula and Voronoi summation \cite{Hough}. In 2013 Conrey
and Snaith treated powers of a CM Gr\"ossencharakter over
$\mathbb Q(\sqrt{-7})$, proving a first-moment asymptotic and a
second-moment upper bound and conjecturing a power-saving second-moment
formula \cite{ConreySnaith}. In 2014 Watt obtained weighted and twisted
extensions of Sarnak's hybrid estimates \cite{Watt}.

Work closer to the present setting developed next. In 2019 Rudnick and
Waxman studied Gaussian primes in shrinking angular sectors and formulated
a random-matrix prediction for the variance \cite{RudnickWaxman}. In 2021
Gao and Zhao obtained moment and one-level-density results for
finite-conductor Hecke families of trivial infinite type \cite{GaoZhao},
while Waxman computed lower-order terms in the one-level density of the
present symplectic family \cite{Waxman}. In 2023 Chen, Kim, Lichtman,
Miller, Shubina, Sweitzer, Waxman, Winsor, and Yang applied the Ratios
Conjecture to the Gaussian angular family, predicting the four-swap
structure and a square-root error \cite{ChenEtAl}. In 2024 J\"arviniemi
and Ter\"av\"ainen emphasized that even a Lindel\"of-order fourth moment
in the pure $k$-aspect is unknown \cite{JarviniemiTeravainen}, while David,
de Faveri, Dunn, and Stucky evaluated a mollified cubic second moment with
a power-saving error \cite{DavidDeFaveriDunnStucky}. In 2026 David, Devin,
and Waxman studied related CM elliptic-curve families
\cite{DavidDevinWaxman}; Castillo, de Faveri, and Dunn obtained the quartic
analogue, including quartic-twist families \cite{CastilloDeFaveriDunn};
and Diaconu, Ion, Pa\c{s}ol, and Popa established first and second twisted
moment asymptotics for general $r$-th order characters
\cite{DiaconuIonPasolPopa}.

These comparisons also explain why the present problem is especially
delicate. Conductor-aspect families admit Poisson summation and Gauss-sum
orthogonality, while Hough's complete cusp-form family admits the Petersson
trace formula. Here the finite conductor is fixed and there is only one
$L(s,\lambda^k)$ at each infinite type, so the $k$-average produces an
angular near-diagonal rather than an arithmetic delta symbol. Its
off-diagonal becomes a Weyl sum over $r^2\equiv-1\bmod C$ and must be
spectralized through Poincar\'e series at $i$. Furthermore, one $v$-type
main term comes from the zero frequency, whereas the other appears only as
the combined residue of two moving Eisenstein poles in a region where the
intermediate $q$-series is not absolutely convergent. Thus the full
four-swap asymptotic requires both spectral continuation and square-root
control of the remaining spectrum, making the sparse pure angular case the
most delicate of the comparisons above.

The purpose of this paper is to prove an unconditional shifted
second-moment formula for the Gaussian family, uniformly for small complex
shifts. The formula contains all four functional-equation swaps and has an
error $O_{\Phi,\epsilon}(K^{1/2+\epsilon})$, which is of square-root size
in the number of members of the family. To the best of the author's
knowledge, this is the first unconditional four-swap shifted second-moment
asymptotic in the pure angular aspect for this family.

A separate methodological inspiration is Motohashi's explicit formula for
the fourth power mean of the Riemann zeta-function
\cite{Motohashi1993}. Motohashi's conversion of an additive-divisor problem
into spectral data through the Kuznetsov trace formula suggested the
author's first approach: to force the present off-diagonal directly into a
Kuznetsov formula. That attempt was unsuccessful. After Poisson
summation the natural arithmetic object is $R(n;C)=
\sum_{\substack{r\bmod C\\r^2\equiv-1\bmod C}}
e\left(\frac{nr}{C}\right)$. This obstruction led to the
construction of Lemma 2.2. Instead of first converting $R(n;C)$ into
Kloosterman sums, we realize it as the value at $i$ of an incomplete
Poincar\'e series and spectrally expand that series itself. In this sense,
Lemma 2.2 is the replacement, adapted to the Gaussian angular family, for
the direct Kuznetsov step in the initial strategy.

The theorem is deliberately restricted to the untwisted moment. For a fixed
nonzero integral ideal $\mathfrak l$, Section 9 formulates a four-swap
conjecture for the moment with the additional factor
$\lambda(\mathfrak l)^k$. Although the diagonal Euler product is then
modified by only finitely many local factors, the proof is not a formal
variant of the argument below. The twist changes the congruence conditions
and arithmetic weights in the off-diagonal parametrization, so Lemma 2.2 no
longer applies in its present form: one would need a twist-dependent
incomplete Poincar\'e series and a correspondingly different spectral
expansion. Keeping track of its Eisenstein residues, continuous spectrum,
and Maa\ss{} coefficients would make the analysis prohibitively intricate.
For this reason, the present paper proves only the untwisted formula and
leaves the twisted case as the conjecture stated in Section 9.

Let $\Phi\in C_c^\infty((0,\infty))$, with $\operatorname{supp}\Phi\subset[1,2]$. Write
$\widetilde\Phi(s)=\int_1^2x^s\Phi(x)\,\dd x$, and define
\[
\mathcal S(u,v,K,\Phi)
=\sum_{k\geq1}\Phi\left(\frac{k}{K}\right)
\Lambda_k^*\left(\frac12+u+v\right)
\Lambda_k^*\left(\frac12+u-v\right).
\]

\begin{theorem}
	For every $\epsilon>0$, the following holds. Suppose that $K$ is sufficiently large,
	that $|\Re u|+|\Re v|\ll(\log K)^{-1}$, and that
	$|u|,|v|<\log K$, with
	$|u|,|v|,|u+v|,|u-v|\geq(\log K)^{-2}$. Then
	\begin{align*}
		\mathcal S(u,v,K,\Phi)
		={}&K\left(\frac{2K}{\pi}\right)^{2u}\widetilde\Phi(2u)
		\frac{\zeta(1+2v+2u)\zeta(1-2v+2u)L_0(1+2u)}
		{\zeta(2+4u)}
		\\
		&+K\left(\frac{2K}{\pi}\right)^{-2u}\widetilde\Phi(-2u)
		\frac{\zeta(1+2v-2u)\zeta(1-2v-2u)L_0(1-2u)}
		{\zeta(2-4u)}
		\\
		&+K\left(\frac{2K}{\pi}\right)^{2v}\widetilde\Phi(2v)
		\frac{\zeta(1+2u+2v)\zeta(1-2u+2v)L_0(1+2v)}
		{\zeta(2+4v)}
		\\
		&+K\left(\frac{2K}{\pi}\right)^{-2v}\widetilde\Phi(-2v)
		\frac{\zeta(1+2u-2v)\zeta(1-2u-2v)L_0(1-2v)}
		{\zeta(2-4v)}
		\\
		&+O_{\Phi,\epsilon}\left(K^{\frac12+\epsilon}\right).
		\tag*{(1.2)}
	\end{align*}
	Here $L_0(s)=\zeta(s)L(s,\chi_{-4})$.
\end{theorem}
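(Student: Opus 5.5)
We follow the route sketched in the introduction. First, each $\Lambda_k^*(\frac12+u\pm v)$ is written by an approximate functional equation. The cleanest version is a single one for the product $\Lambda_k^*(\frac12+\alpha)\Lambda_k^*(\frac12+\beta)$ with $\alpha=u+v$, $\beta=u-v$. Its cutoff is a contour integral $\frac{1}{2\pi i}\int G(w)\,\frac{\gamma_k(\alpha+w)\gamma_k(\beta+w)}{\gamma_k(\alpha)\gamma_k(\beta)}\,\frac{dw}{w}$, where $\gamma_k(s)=\pi^{-s}\Gamma(2k+\frac12+s)$. We take $G$ even and vanishing at the swap points. The functional equation $\Lambda_k^*(\frac12+s)=\Lambda_k^*(\frac12-s)$ makes the product invariant under $(\alpha,\beta)\mapsto(-\beta,-\alpha)$, which gives two terms, each a double Dirichlet series
\[
\sum_{\mathfrak a,\mathfrak b}\frac{\lambda(\mathfrak a)^k\overline{\lambda(\mathfrak b)}^{\,k}}{N\mathfrak a^{1/2+\alpha}N\mathfrak b^{1/2+\beta}}
\]
(using $\lambda^k(\mathfrak b)=\overline{\lambda^{k}(\bar{\mathfrak b})}$) against a weight. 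Stirling gives this weight as essentially $(k/\pi)^{2w}$, with expansions in $w^2/k$ that feed the error. We then sum over $k$ with $\Phi(k/K)$. Writing $\mathfrak a=(a)$, $\mathfrak b=(b)$, we have $\lambda(\mathfrak a)^k\overline{\lambda(\mathfrak b)}^{\,k}=e^{4ik(\arg a-\arg b)}$. Poisson summation in $k$ turns $\sum_k\Phi(k/K)e^{4ik\theta}$ into $K\sum_h\widehat\Phi(K(4\theta/2\pi-h))$. So only pairs whose angles agree modulo $\pi/2$ up to $O(1/K)$ survive: this is the angular near-diagonal.

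The \emph{diagonal} is the part with $\arg a\equiv\arg b$, i.e.\ $a\bar b$ a positive rational multiple after normalizing units. Parametrize $a=gm$, $b=gn$ with $g\in\mathbb Z[i]$ and $m,n\in\mathbb Z_{>0}$ coprime. This produces $\sum_{\mathfrak g}N\mathfrak g^{-1-\alpha-\beta}\sum_{(m,n)=1}m^{-1-2\alpha}n^{-1-2\beta}$, which has the Euler product $\zeta(1+2\alpha)\zeta(1+2\beta)L_0(1+\alpha+\beta)/\zeta(2+2\alpha+2\beta)$. Shifting the $w$-contour to the left picks up the residue at $w=0$. The two halves of the approximate functional equation then yield precisely the $u$-type terms $K(2K/\pi)^{\pm2u}\widetilde\Phi(\pm2u)\cdots$; the factor $(2K/\pi)^{2u}$ comes from $\gamma_k$ ratios after integrating $k^{2u}\Phi(k/K)$ (note $N\mathfrak a\asymp k^2$ scale, $\Gamma(2k+\cdot)$ gives $(2k)$). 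The remaining $v$-type terms must come from the off-diagonal.

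The \emph{off-diagonal} is $h\neq0$, or more precisely near-collinear pairs that are not exactly collinear. Write $a\bar b=x+iy$ with $y\neq0$ small relative to $x$. After changing variables, the condition that $|a|^2=N\mathfrak a$ and $|b|^2$ are given means counting representations. For fixed $C=|b|^2$, the $a$ with $a\bar b\equiv$ integer structure are parametrized by roots $r$ of $r^2\equiv-1\pmod C$. A second Poisson summation in the integral variable then produces the Weyl sums $R(n;C)$. Lemma 2.2 identifies $\sum_C R(n;C)\,\psi(C)$ with an incomplete Poincar\'e series $P_{n,\psi}(i)$, which we expand spectrally.
\begin{itemize}
\item The \emph{zero frequency} $n=0$ gives $R(0;C)=\#\{r\}$, whose Dirichlet series is $\zeta_{\mathbb Q(i)}$-type. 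After Mellin inversion, this yields the term $K(2K/\pi)^{2v}\widetilde\Phi(2v)\cdots$.
\item The \emph{continuous spectrum} contributes a Mellin integral of $E(i,\frac12+it)$ times divisor-type coefficients. Shifting that contour past poles at $t$ depending on $v$, the two moving Eisenstein poles combine to give the $(2K/\pi)^{-2v}$ term; the remaining line integral is $O(K^{1/2+\epsilon})$.
\item The \emph{cusp forms} contribute $\sum_j \rho_j(n)u_j(i)\,h(t_j)$. By Kim--Sarnak and spectral large sieve/Weyl law bounds, this is $O(K^{1/2+\epsilon})$.
\end{itemize}

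The main obstacle is the second $v$-type term. It arises in a region where the intermediate $q$-series (over $C$) does not converge absolutely, so one must analytically continue the Eisenstein contribution in the shifts. The residues at the two poles $s=\frac12\pm v$ (roughly) must then be shown to combine exactly into $\zeta(1+2u-2v)\zeta(1-2u-2v)L_0(1-2v)/\zeta(2-4v)$. I would first verify this by matching Euler factors of the Eisenstein Fourier coefficients $\sigma$-sums at $i$ against the Ratios prediction, using the lower bounds $|u\pm v|,|v|\ge(\log K)^{-2}$ to keep the poles separated uniformly. The conditions on $\Re u,\Re v$ ensure that $K^{\pm2u},K^{\pm2v}$ stay $\ll1$ in size, and that $|u|,|v|<\log K$ keeps the $\Gamma$-factor and Stirling losses at $K^{\epsilon}$.
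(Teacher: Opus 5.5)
Your architecture is the paper's. Your two halves with kernel $G(w)/w$ combine into exactly the paper's symmetric kernel $H_u(s)=H(s-u)+H(s+u)$ with $H(w)=G(w)/w$. After that come Poisson in $k$ and the diagonal Euler product (your parametrization $a=gm$, $b=gn$ is correct). The off-diagonal reduces to $R(q\eta;C)$, with $C$ the norm of the primitive part of one factor after removing its rational content, not $|b|^2$. Lemma~2.2 then applies, and the zero frequency and the moving Eisenstein poles supply the two $v$-terms.

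\textbf{Residue bookkeeping.} Your accounting ``diagonal $=$ $u$-terms, zero frequency $=$ the $(2K/\pi)^{2v}$ term, Eisenstein residues $=$ the $(2K/\pi)^{-2v}$ term'' is not right in general. To reach $K^{1/2+\epsilon}$ in the diagonal you must move to $\Re s=-\frac14$ (with $s=u+w$, resp.\ $s=w-u$). Besides $s=\pm u$ you then cross $s=\pm v$, the poles of $\zeta(1+2v+2s)\zeta(1-2v+2s)$; these are your $w=-\alpha,-\beta$, resp.\ $w=\alpha,\beta$. They give non-negligible terms of order $K$, namely $\frac K2\left(\frac{2K}{\pi}\right)^{\pm2v}\widetilde\Phi(\pm2v)H_u(\pm v)\zeta(1\pm4v)L_0(1\pm2v)/\zeta(2\pm4v)$. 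The zero-frequency term and the Eisenstein residue each produce the negative of one of them, and (1.2) appears only after this cancellation. Your one-to-one matching holds only if $G$ vanishes at $\pm\alpha,\pm\beta$, which forces $H_u(\pm v)=0$ in all three computations at once. A $G$ that only removes the $L_0$-pole does not do this, so you must make and check this choice explicitly.

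\textbf{Square-root error.} The more serious gap is the error term: you assert it with tools that do not address the real obstruction. After Lemma~2.2 the $C$-transform is a pure power in the dual frequency, $\mathcal K_{q\eta}g_{h,\eta,q}(t)=h^{\frac12+2v}|q|^{-\frac12-2v}\mathscr B^{\pm}_{h\eta}(t)$. So in both spectral pieces the $q$-sum is a Dirichlet series, $\sum_q\sigma_{2it}(q|\eta|)q^{-\frac12-2v-it}$ or $\sum_q\lambda_j(q|\eta|)q^{-\frac12-2v}$, and it diverges near $\Re v=0$; Kim--Sarnak does not help here. You must prove the spectral identity for $\frac12<\Re v<1$, using Lemma~2.3 since the $C$-weights are not compactly supported. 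Then you must continue both pieces. The cuspidal one goes through $L(\frac12+2v,u_j)$ times a finite Euler factor. The Eisenstein one goes through $\zeta(\frac12+2v+it)\zeta(\frac12+2v-it)$ times a finite Euler factor, across $\Re v=\frac14$. There the poles $t_\pm=\pm i(2v-\frac12)$ cross the real axis; these are $s=1-2v$ and $s=2v$, not $\frac12\pm v$.

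What makes every step converge normally, and what produces the factor $K^{1/2}$, is a uniform transform estimate, $e^{\frac\pi2|t|}|\mathscr B^\pm_n(t)|\ll K^{\frac12+\epsilon}(1+|t|)^{-A}(1+|n|)^{-A}$, together with its fixed-strip, complex-$t$ variant. The factor $e^{-\frac\pi2|t|}$ offsets the growth of $\rho_j(1)$ and $1/\xi(1+2it)$. The decay localizes $t$ and $h\eta$ to size $K^\epsilon$. After that, convexity for $L(\frac12+2v,u_j)$, the trivial Hecke bound, and the local Weyl law $\sum_{T\le t_j<2T}|u_j(i)|\ll T^2$ finish the bound; no large sieve is needed.

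\textbf{Computing the residue.} The combined residue must be computed, not matched to the Ratios recipe. The two residues coincide by $K_{it}=K_{-it}$ and the functional equations of $E(i,\cdot)$ and $\xi$. One uses $E(i,1-2v)=2L_0(1-2v)/\zeta(2-4v)$ and $\sum_{h\mid n}h^{-4v}\mathscr P_{n/h}(t_+;v)=\sigma_{1-4v}(n)$. The cosine--Bessel integral turns $\mathscr B_n(t_+)$ into $\frac{\Gamma(2v)}{2\pi^{2v}}|n|^{2v-\frac12}\int(x^2+n^2)^{-v}W_1(x+in)\,\dd x$. The zero-frequency beta-integral computation then yields $\zeta(1+2u-2v)\zeta(1-2u-2v)$.
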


The moment and the asserted main term are even in $v$. Indeed, replacing
$v$ by $-v$ merely interchanges the two completed $L$-factors, so
$\mathcal S(u,-v,K,\Phi)=\mathcal S(u,v,K,\Phi)$. On the right-hand side
of (1.2), the first two terms are unchanged, while the third and fourth
terms are interchanged; the shift hypotheses are invariant as well.
Consequently, it is enough to prove (1.2) when $\Re v>0$. The case
$\Re v<0$ follows by applying that result with $-v$ in place of $v$, and
the boundary case $\Re v=0$ follows by continuity from the uniform
estimate. Some intermediate continuation statements below are formulated
on larger strips for convenience.

\begin{corollary}
	Let $\alpha=u+v$ and $\beta=u-v$, and suppose that $u,v$ satisfy the shift conditions in the theorem. Then
	\begin{align*}
		&\sum_{k\geq1}\Phi\left(\frac{k}{K}\right)
		L_k\left(\frac12+\alpha\right)
		L_k\left(\frac12+\beta\right)
		\\
		={}&K\widetilde\Phi(0)
		\frac{\zeta(1+2\alpha)\zeta(1+2\beta)
			L_0(1+\alpha+\beta)}
		{\zeta(2+2\alpha+2\beta)}
		\\
		&+K\left(\frac{2K}{\pi}\right)^{-2\alpha-2\beta}
		\widetilde\Phi(-2\alpha-2\beta)
		\frac{\zeta(1-2\alpha)\zeta(1-2\beta)
			L_0(1-\alpha-\beta)}
		{\zeta(2-2\alpha-2\beta)}
		\\
		&+K\left(\frac{2K}{\pi}\right)^{-2\beta}
		\widetilde\Phi(-2\beta)
		\frac{\zeta(1+2\alpha)\zeta(1-2\beta)
			L_0(1+\alpha-\beta)}
		{\zeta(2+2\alpha-2\beta)}
		\\
		&+K\left(\frac{2K}{\pi}\right)^{-2\alpha}
		\widetilde\Phi(-2\alpha)
		\frac{\zeta(1-2\alpha)\zeta(1+2\beta)
			L_0(1-\alpha+\beta)}
		{\zeta(2-2\alpha+2\beta)}
		\\
		&+O_{\Phi,\epsilon}\left(K^{\frac12+\epsilon}\right).
		\tag*{(1.3)}
	\end{align*}
\end{corollary}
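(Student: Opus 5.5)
The plan is to deduce the corollary directly from the Theorem by removing the Archimedean factors, i.e.\ converting $\Lambda_k^*$ back to $L_k$ term by term. Set $u=(\alpha+\beta)/2$, $v=(\alpha-\beta)/2$, so that $\frac12+u+v=\frac12+\alpha$ and $\frac12+u-v=\frac12+\beta$. By (1.1),
\[
L_k\left(\tfrac12+\alpha\right)L_k\left(\tfrac12+\beta\right)
=\pi^{\alpha+\beta}\,
\frac{\Gamma(2k+\frac12)^2}{\Gamma(2k+\frac12+\alpha)\Gamma(2k+\frac12+\beta)}\,
\Lambda_k^*\left(\tfrac12+\alpha\right)\Lambda_k^*\left(\tfrac12+\beta\right).
\]
Stirling's formula, uniformly for $|\alpha|,|\beta|\ll\log K$ with small real parts, gives
\[
\frac{\Gamma(2k+\frac12)}{\Gamma(2k+\frac12+\alpha)}=(2k)^{-\alpha}\bigl(1+O(|\alpha|^2/k)\bigr).
\]
Hence the weight attached to $\Lambda_k^*\Lambda_k^*$ is $\pi^{2u}(2k)^{-2u}$ up to a relative error $O((\log K)^2/K)$.

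The next step is to run the Theorem with the modified test function. Write $\pi^{2u}(2k)^{-2u}=(2K/\pi)^{-2u}(k/K)^{-2u}$ and set $\Phi_u(x)=x^{-2u}\Phi(x)$. This is again smooth and supported in $[1,2]$, and its derivatives are bounded by powers of $\log K$, which is harmless provided the $O_{\Phi,\epsilon}$ dependence in the Theorem is polynomial in finitely many Sobolev norms of $\Phi$. I would check this from the proof, or absorb it into $K^\epsilon$. Then
\[
\widetilde{\Phi_u}(s)=\widetilde\Phi(s-2u).
\]
The main part of the left side of (1.3) therefore equals $(2K/\pi)^{-2u}\mathcal S(u,v,K,\Phi_u)$. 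The correction terms carry a factor $1/k$ and are bounded by $K^{-1}(\log K)^2\sum_{k\asymp K}|\Lambda_k^*\Lambda_k^*|$. By Cauchy--Schwarz and the Theorem at the shifts $(\Re u,\pm\Re v)$, or by any second-moment upper bound of size $K^{1+\epsilon}$ (the Theorem itself gives one), this is $O(K^{\epsilon})$.

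Finally, I would substitute the four terms of (1.2) for $\Phi_u$ and multiply by $(2K/\pi)^{-2u}$:
\begin{itemize}
\item The $+2u$ term becomes $K\,\widetilde\Phi(0)\,\zeta(1+2\alpha)\zeta(1+2\beta)L_0(1+\alpha+\beta)/\zeta(2+2\alpha+2\beta)$.
\item The $-2u$ term gets $(2K/\pi)^{-4u}\widetilde\Phi(-4u)$ with $4u=2\alpha+2\beta$, together with the zeta factors $\zeta(1-2\beta)\zeta(1-2\alpha)$.
\item The $\pm2v$ terms give exponents $-2u\pm2v$, equal to $-2\beta$ and $-2\alpha$ respectively, with $\widetilde\Phi(\pm2v-2u)$ matching and zeta arguments $1+2\alpha,1-2\beta$ and $1-2\alpha,1+2\beta$, and $L_0(1\pm2v)=L_0(1\pm(\alpha-\beta))$.
\end{itemize}
This is exactly (1.3).

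The error term is $(2K/\pi)^{-2u}O(K^{1/2+\epsilon})=O(K^{1/2+\epsilon})$, since $|\Re u|\ll1/\log K$. The only real obstacle is the uniformity of the Theorem's error in $\Phi$. I expect this to hold because the proof uses only finitely many derivatives of $\Phi$ via integration by parts, and each derivative of $x^{-2u}$ costs at most $\log K$.
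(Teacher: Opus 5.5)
Your argument follows the paper's proof in all essentials. The substitution $u=(\alpha+\beta)/2$, $v=(\alpha-\beta)/2$, the modified weight $\Phi_u(x)=x^{-2u}\Phi(x)$ (the paper's $\Psi$), the identity $\widetilde{\Phi_u}(s)=\widetilde\Phi(s-2u)$, the prefactor $(2K/\pi)^{-2u}$, and your matching of the four swaps are exactly (1.5) and the substitution into (1.2). Both proofs also rely on the Theorem holding uniformly for test functions whose derivatives are $(\log K)^{O(1)}$, which the paper calls ``the uniform form of the theorem.''

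The one divergence is the Stirling correction $1+O(K^{-1+\epsilon})$. The paper does not estimate it separately. This factor and its $k$-derivatives obey the corresponding bounds, with each $\partial_k$ gaining $K^{-1}$, so it is absorbed into the test function. That is, the Theorem is applied to $x\mapsto\Psi(x)(1+\delta(Kx))$, whose Mellin transform differs from $\widetilde\Psi$ by $O(K^{-1+\epsilon})$. The main terms then move by only $O(K^{\epsilon})$, and no a priori moment bound is needed.

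Your route takes absolute values, so it needs an a priori bound such as $\sum_{K\le k\le 2K}|\Lambda_k^*(\tfrac12+\alpha)|^2\ll K^{3/2+\epsilon}$. You cannot skip this: convexity alone gives only $K^{2+\epsilon}$. The bound is true, but not by applying the Theorem ``at the shifts $(\Re u,\pm\Re v)$.'' The identity $|\Lambda_k^*(\tfrac12+\alpha)|^2=\Lambda_k^*(\tfrac12+\alpha)\Lambda_k^*(\tfrac12+\bar\alpha)$ corresponds to the shifts $(u',v')=(\Re\alpha,\,i\Im\alpha)$. These can violate the hypotheses $|u'|,|v'|\ge(\log K)^{-2}$; for example, $\Re\alpha=0$ is allowed.

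To extract the bound from the Theorem you would need an extra step. One way is to use holomorphy of $\mathcal S$ in the shifts and the maximum principle on a polydisc of radius $\asymp(\log K)^{-2}$ whose distinguished boundary consists of admissible shifts. You would also need a nonnegative smooth weight supported in $[1,2]$ that controls $|\Phi(k/K)|$, for instance via a finite covering of $[K,2K]$ by rescaled bumps.

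So your proof can be completed. The paper's absorption into the test weight is shorter, though, and uses nothing beyond the uniformity in the test function that you already invoke for $\Phi_u$.
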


\begin{proof}
	By (1.1) and the uniform form of Stirling's formula, for $K\leq k\leq2K$ we have
	\begin{align*}
		&\pi^{\alpha+\beta}
		\frac{\Gamma(2k+\frac12)^2}
		{\Gamma(2k+\frac12+\alpha)
			\Gamma(2k+\frac12+\beta)}
		\\
		&\qquad=\left(\frac{\pi}{2k}\right)^{\alpha+\beta}
		\left(1+O_\epsilon(K^{-1+\epsilon})\right).
		\tag*{(1.4)}
	\end{align*}
	The error term in (1.4), as well as its $k$-derivatives of any fixed order, satisfies the corresponding uniform estimates under the shift bounds above. Therefore, the error term may be absorbed into the test weight, after which the uniform form of the theorem may be applied. Let
	$\Psi(x)=x^{-\alpha-\beta}\Phi(x)$. Thus
	\begin{align*}
		&\sum_{k\geq1}\Phi\left(\frac{k}{K}\right)
		L_k\left(\frac12+\alpha\right)
		L_k\left(\frac12+\beta\right)
		\\
		&\qquad=\left(\frac{\pi}{2K}\right)^{\alpha+\beta}
		\mathcal S\left(
		\frac{\alpha+\beta}{2},
		\frac{\alpha-\beta}{2},K,\Psi
		\right)
		+O_{\Phi,\epsilon}\left(K^{\frac12+\epsilon}\right).
		\tag*{(1.5)}
	\end{align*}
	Moreover, $\widetilde\Psi(s)=\widetilde\Phi(s-\alpha-\beta)$. Substituting
	$2u=\alpha+\beta$ and $2v=\alpha-\beta$ into (1.2) gives (1.3).
\end{proof}

Formula (1.3) is the numerator-only realization of the four-swap structure
predicted for this family by the ratios recipe in
\cite[Conjecture 4.1]{ChenEtAl}. For example, the no-swap Euler factor is
\[
\frac{\zeta(1+2\alpha)\zeta(1+2\beta)
	L_0(1+\alpha+\beta)}
{\zeta(2+2\alpha+2\beta)},
\]
and the other three terms are obtained by applying the functional equation
in the first variable, in the second variable, or in both variables,
together with the corresponding powers of $2K/\pi$. Thus the four-term
structure itself is conjecturally visible in the earlier ratios
calculation; the content of the theorem is its unconditional proof,
including the uniform square-root error.

We briefly describe the proof. A symmetric approximate functional equation
reduces the moment to a weighted sum over pairs of Gaussian integers.
Poisson summation in the angular parameter separates a diagonal
contribution from an off-diagonal contribution. After a sequence of
elementary parametrizations, the latter is expressed in terms of the sums
\[
R(n;C)=
\sum_{\substack{r\bmod C\\r^2\equiv-1\bmod C}}
e\left(\frac{nr}{C}\right).
\]
These sums are realized as the value at the elliptic point $i$ of an
incomplete Poincar\'e series. The standard spectral decomposition on
$\mathrm{SL}_2(\mathbb Z)\backslash\mathbb H$ then splits the nonzero
frequency into Eisenstein and Maa\ss{} contributions; background for the
spectral expansion and the local Weyl law may be found in
\cite{IwaniecSpectral}. The zero frequency supplies the main term with the
factor $(2K/\pi)^{2v}$. A common transform estimate supplies rapid decay in
the spectral parameter and summability in the remaining arithmetic
variables. It also gives normal convergence on compact subsets of the
shift region, which permits the initially truncated spectral identity to
be continued meromorphically. When the Eisenstein contour is moved, two
moving poles are crossed, and their combined residues supply the main term
with the factor $(2K/\pi)^{-2v}$ together with the term needed for the
final cancellation. The integral on the continued contour and the entire
Maa\ss{} spectrum contribute $O_{\Phi,\epsilon}(K^{1/2+\epsilon})$. No
unproved moment conjecture or spectral hypothesis is used.

The paper is organized as follows. Section 2 records the lattice identity
and the spectral expansion of the Weyl sums. Section 3 derives the
symmetric approximate functional equation. Section 4 establishes the
weight estimates and the diagonal--off-diagonal decomposition. The
diagonal is evaluated in Section 5, and the off-diagonal is parametrized in
Section 6. Section 7 treats its zero-frequency term. Section 8 proves the
common transform estimate, carries out the Eisenstein continuation and
residue calculation, bounds the Maa\ss{} spectrum, and combines the four
main terms. Finally, Section 9 formulates the twisted four-swap conjecture,
identifies its ideal-theoretic Euler factors, and explains the additional
spectral complications that prevent the present proof from treating the
twisted moment.

\subsection{Notation and Conventions}

Fix a final $\epsilon>0$. Throughout, we assume
$|\Re u|+|\Re v|\ll(\log K)^{-1}$ and
$|u|,|v|<\log K$, and
$|u|,|v|,|u+v|,|u-v|\geq(\log K)^{-2}$. The symbol $A$
denotes an arbitrarily large positive constant. Intermediate occurrences
of $K^\epsilon$ denote arbitrarily small auxiliary losses; since only
finitely many such losses occur, their sum is kept below the final
$\epsilon$ fixed above. Write $e(x)=e^{2\pi i x}$.
Every fixed power of $\log K$ arising from the shifts, the finitely many
differentiations, and the contour shifts under consideration is absorbed
into $K^\epsilon$.

Multiple series are summed from right to left. The order of summation is changed only after absolute convergence has been established or a truncation limit has been specified.
	\section{Two Basic Lemmas}
	
	\begin{lemma}
		When $\Re s>1$,
		\[
		L_k(s)=\frac14\sum_{m\in\mathbb Z[i]\setminus\{0\}}
		e^{4ik\arg m}|m|^{-2s}.
		\tag*{(2.1)}
		\]
		This identity is subsequently understood by analytic continuation.
	\end{lemma}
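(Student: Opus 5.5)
The identity follows from unique generators of ideals up to units, together with absolute convergence for $\Re s>1$. Since $\mathbb Z[i]$ is a principal ideal domain, every $\mathfrak a\in\mathrm{Id}^{*}$ has the form $\mathfrak a=(m)$ for some $m\in\mathbb Z[i]\setminus\{0\}$. Such an $m$ is determined up to multiplication by one of the four units $\{\pm1,\pm i\}$. Hence the map $m\mapsto(m)$ from $\mathbb Z[i]\setminus\{0\}$ onto $\mathrm{Id}^{*}$ is exactly four-to-one.

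We next evaluate the summand in terms of a generator. Writing $m=|m|e^{i\arg m}$, we have
\[
\lambda((m))^k=\left(\frac{m^4}{|m|^4}\right)^k=e^{4ik\arg m},
\qquad N((m))=|m|^2 .
\]
Both quantities are unchanged when $m$ is replaced by $\varepsilon m$ with $\varepsilon^4=1$. This is consistent with the well-definedness remark in the introduction, and it also means the choice of branch of $\arg m$ is irrelevant, since only $e^{4i\arg m}$ enters. Therefore each ideal's term $\lambda(\mathfrak a)^kN(\mathfrak a)^{-s}$ appears exactly four times in $\sum_{m\neq0}e^{4ik\arg m}|m|^{-2s}$. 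This gives (2.1), provided the rearrangement is justified.

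For the justification, note that when $\Re s=\sigma>1$ the lattice sum is absolutely convergent. Indeed, $\#\{m:|m|^2\le X\}\ll X$, so partial summation gives $\sum_{m\ne0}|m|^{-2\sigma}<\infty$. Grouping the terms of an absolutely convergent series by the four-element fibres is therefore legitimate, and the Dirichlet series for $L_k(s)$ converges absolutely in the same region.

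Finally, for $k\ge1$ the function $L_k(s)$ continues to an entire function, since $\Lambda_k^*$ is entire and $\Gamma(2k+s)$ has no poles near the critical strip. We then take (2.1) to mean the analytic continuation of its right-hand side, which is the same function. There is no real obstacle here. The only points requiring care are the four-to-one count and the unit-invariance of $e^{4ik\arg m}$.
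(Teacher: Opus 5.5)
Your proof is correct and follows the paper's own argument: every ideal is principal with exactly four unit-related generators, on which $e^{4ik\arg m}$ and $|m|^2$ are constant, so the ideal sum is one quarter of the lattice sum; your explicit absolute-convergence check for $\Re s>1$ is a welcome detail the paper leaves implicit. One small correction to your continuation remark: $L_k$ is entire because $L_k(s)=\pi^{s-\frac12}\Gamma(2k+\frac12)\Lambda_k^*(s)/\Gamma(2k+s)$ and $1/\Gamma$ is entire, not merely because $\Gamma(2k+s)$ has no poles near the critical strip (that reasoning alone would only give holomorphy near the strip).
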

	
	\begin{proof}
		Every nonzero ideal is principal. A nonzero principal ideal has exactly four generators, which differ from one another by Gaussian units. Since the fourth power of every Gaussian unit is $1$, the four generators give the same value in the sum. Hence the sum over ideals is one-fourth of the sum over Gaussian integers.
	\end{proof}
	
	We next give the Weyl sum and spectral expansion needed below. We use the
	standard normalization of the spectral expansion in
	\cite[Chapters 3 and 7]{IwaniecSpectral}. Define
	\[
	R(n;C)=
	\sum_{\substack{r\bmod C\\r^2\equiv-1\bmod C}}
	e\left(\frac{nr}{C}\right).
	\]
	
	\begin{lemma}
		Let $n\in\mathbb Z\setminus\{0\}$ and
		$g\in C_c^\infty(\mathbb R_{>0})$. Then
		\begin{align*}
			\sum_{C\geq1}R(n;C)g(C)
			={}&\frac12\sum_j\overline{\rho_j(n)}u_j(i)\mathcal K_ng(t_j)
			\\
			&+\frac{1}{8\pi}\int_{-\infty}^{\infty}
			\overline{\rho_t(n)}E\left(i,\frac12+it\right)
			\mathcal K_ng(t)\,\dd t,
			\tag*{(2.2)}
		\end{align*}
		where
		\[
		\mathcal K_ng(t)=\int_0^\infty g(C)C^{-1/2}
		K_{it}\left(\frac{2\pi|n|}{C}\right)\,\dd C.
		\]
		Here $\{u_j\}$ is a standard orthonormal basis in $L^2(\mathrm{SL}_2(\mathbb Z)\backslash\mathbb H)$ of weight $0$ Maa\ss{} cusp forms, with the Fourier expansion
		\[
		u_j(z)=\sum_{m\neq0}\rho_j(m)\sqrt y
		K_{it_j}(2\pi|m|y)e(mx),
		\qquad z=x+iy.
		\]
		The standard Eisenstein series is denoted by
		\[
		E(z,s)=\sum_{\gamma\in\Gamma_\infty\backslash\Gamma}
		\Im(\gamma z)^s,
		\qquad \Gamma=\mathrm{SL}_2(\mathbb Z),
		\]
		where
		$\Gamma_\infty=\{\pm\left(\begin{smallmatrix}1&m\\0&1\end{smallmatrix}\right):m\in\mathbb Z\}$. At $s=\frac12+it$, its nonzero Fourier coefficients are
		\[
		\rho_t(m)=
		\frac{2|m|^{it}\sigma_{-2it}(|m|)}{\xi(1+2it)},
		\qquad
		\xi(s)=\pi^{-s/2}\Gamma\left(\frac{s}{2}\right)\zeta(s).
		\]
	\end{lemma}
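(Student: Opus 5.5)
We will prove Lemma 2.2 by writing the left side as the value at $z=i$ of an incomplete Poincar\'e series and then expanding that series spectrally. The starting point is the classical parametrization of roots of $r^2\equiv-1\bmod C$ by matrices $\gamma\in\Gamma$ that send $i$ to points of imaginary part $1/C$.

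\textbf{Step 1: the parametrization.} For $\gamma=\left(\begin{smallmatrix}a&b\\c&d\end{smallmatrix}\right)\in\Gamma$ we have $\gamma i=\frac{(ai+b)(-ci+d)}{c^2+d^2}$. Hence
\[
\Im\gamma i=\frac{1}{c^2+d^2},\qquad \Re\gamma i=\frac{ac+bd}{c^2+d^2}.
\]
Put $C=c^2+d^2$. Primitive representations $C=c^2+d^2$ with $\gcd(c,d)=1$ correspond to roots $r\bmod C$ of $r^2\equiv-1$, via $r\equiv -d\bar c$ or an equivalent normalization. The cosets $\Gamma_\infty\gamma$ determine $(c,d)$ up to sign and shift $\Re\gamma i$ by integers. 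Modulo $1$ we get $\Re\gamma i\equiv r'/C$, where $r'$ runs over the roots $r'\equiv (ac+bd)$.

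We should also check the multiplicity. The stabilizer of $i$ in $\Gamma/\{\pm1\}$ has order $2$, generated by $\left(\begin{smallmatrix}0&-1\\1&0\end{smallmatrix}\right)$, which swaps $(c,d)\mapsto(d,-c)$. We expect each root $r\bmod C$ to arise from exactly $2$ cosets in $\Gamma_\infty\backslash\Gamma$, uniformly in $C\ge1$; the cases $C=1,2$ need a separate check. The map $r\mapsto -r$ is also relevant, but $R(n;C)$ is real and even in $n$, so signs are harmless. This gives
\[
\sum_{\gamma\in\Gamma_\infty\backslash\Gamma} e(n\Re\gamma i)\,\psi(\Im\gamma i)=2\sum_{C\ge1}R(n;C)\,\psi(1/C),
\]
where $\psi(y)=g(1/y)$ is compactly supported in $y>0$. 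The left side is $P_{n,\psi}(i)$, with
\[
P_{n,\psi}(z)=\sum_{\Gamma_\infty\backslash\Gamma}e(n\Re\gamma z)\psi(\Im\gamma z).
\]

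\textbf{Step 2: spectral expansion.} Since $\psi$ has compact support, $P_{n,\psi}$ is smooth and compactly supported mod $\Gamma$, so it lies in $L^2$ and is orthogonal to constants when $n\neq0$. By unfolding,
\[
\langle P_{n,\psi},u_j\rangle=\int_0^\infty\psi(y)\overline{\rho_j(n)}\sqrt y\,K_{it_j}(2\pi|n|y)\,y^{-2}\,\dd y,
\]
using $K_{it}$ real for real $t$; for exceptional spectrum there is none on $\mathrm{SL}_2(\mathbb Z)$. The analogous formula holds for $E(\cdot,\tfrac12+it)$ with coefficient $\rho_t(n)$. Substituting $y=1/C$ gives $\dd y\,y^{-2}=\dd C$ and $\sqrt y=C^{-1/2}$, so each inner product becomes $\overline{\rho(n)}\mathcal K_ng(t)$.

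The expansion is $P=\sum_j\langle P,u_j\rangle u_j+\frac1{4\pi}\int\langle P,E_t\rangle E_t\,\dd t$ in Iwaniec's normalization. Evaluating at $z=i$ is legitimate because $P$ is smooth and the expansion converges pointwise absolutely, for example by applying $(\Delta)^A$ and using the local Weyl law. Dividing by $2$ produces the factors $\frac12$ and $\frac1{8\pi}$.

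\textbf{Main obstacle.} The expected difficulty is the bookkeeping in Step 1: getting the exact multiplicity $2$ for every $C$, including the elliptic contributions at $C=1,2$, and matching the normalization of $\rho_t(m)$ with the $\xi(1+2it)$ denominator to the constant $\frac1{4\pi}$ in the continuous spectrum. We would verify these constants by testing at $C=1$ and by using the known Fourier expansion $E(z,s)=y^s+\dots+\frac{2\pi^s|m|^{s-1/2}\sigma_{1-2s}(|m|)}{\Gamma(s)\zeta(2s)}\sqrt y K_{s-1/2}(2\pi|m|y)e(mx)$. This expansion matches $\rho_t(m)$ as stated, since $|m|^{it}\sigma_{-2it}$ agrees with $|m|^{-it}\sigma_{2it}$.
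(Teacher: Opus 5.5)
Your proposal is correct and follows the paper's proof essentially step for step. Both arguments realize $2\sum_C R(n;C)g(C)$ as $P_{n,g}(i)$ via $\gamma i=(ac+bd+i)/(c^2+d^2)$, unfold against $u_j$ and $E(\cdot,\tfrac12+it)$, substitute $y=1/C$, and halve Iwaniec's expansion with its $\frac1{4\pi}$ continuous-spectrum constant. The multiplicity you leave as ``expected'' is proved uniformly in the paper, with no separate check at $C=1,2$: the ideal identity $(C,r+i)=(c+id)$ gives the bijection $\Gamma_\infty\backslash\Gamma/\Gamma_i\to\{(C,r)\}$, and $\Gamma_i\cap\gamma^{-1}\Gamma_\infty\gamma=\{\pm I\}$ (trace $0$ versus $\pm2$) shows that each double coset contains exactly two left cosets.
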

	
	\begin{proof}
		Define the incomplete Poincar\'e series
		\[
		P_{n,g}(z)=
		\sum_{\gamma\in\Gamma_\infty\backslash\Gamma}
		g\left(\frac{1}{\Im(\gamma z)}\right)
		e\bigl(n\Re(\gamma z)\bigr).
		\]
		It is a smooth square-integrable automorphic function. If
		$\gamma=\left(\begin{smallmatrix}a&b\\c&d\end{smallmatrix}\right)$, then
		\[
		\gamma i=\frac{ac+bd+i}{c^2+d^2}.
		\]
		Let $C=c^2+d^2$ and $r=ac+bd$. The identity
		$r^2+1=(a^2+b^2)C$ gives $r^2\equiv-1\bmod C$.
		
		Write
		$\Gamma_i=\operatorname{Stab}_\Gamma(i)=\{\pm I,\pm S\}$, where
		$S=\left(\begin{smallmatrix}0&-1\\1&0\end{smallmatrix}\right)$. The map
		\[
		\Gamma_\infty\backslash\Gamma/\Gamma_i
		\longrightarrow
		\{(C,r):C\geq1,\ r\bmod C,\ r^2\equiv-1\bmod C\},
		\]
		$\gamma\mapsto(c^2+d^2,ac+bd\bmod C)$ is a bijection. To prove surjectivity, given $C$ and $r$, consider
		\[
		\psi_{C,r}:\mathbb Z[i]\longrightarrow\mathbb Z/C\mathbb Z,
		\qquad x+iy\longmapsto x-ry\bmod C.
		\]
		Its kernel is $(C,r+i)$, which has norm $C$. Choose a generator $c+id$; then
		$r+i=(a-ib)(c+id)$ gives $ac+bd=r$ and $ad-bc=1$. Injectivity follows from
		$(C,r+i)=(c+id)$, since the generators of a given ideal differ only by units, and right multiplication by $\Gamma_i$ realizes exactly these unit transformations.
		
		Each double coset contains two left cosets. Indeed,
		$\Gamma_i\cap\gamma^{-1}\Gamma_\infty\gamma=\{\pm I\}$. This is because the trace of $\pm S$ is $0$, whereas the noncentral elements of $\gamma^{-1}\Gamma_\infty\gamma$ have trace $\pm2$. The two left cosets in the same double coset give the same value at $i$, so
		\[
		P_{n,g}(i)=2\sum_{C\geq1}R(n;C)g(C).
		\tag*{(2.3)}
		\]
		
		Since $n\neq0$, the inner product with the constant spectrum is $0$. Unfolding the inner product with the cusp form and setting $C=y^{-1}$ gives
		\begin{align*}
			\langle P_{n,g},u_j\rangle
			&=\overline{\rho_j(n)}
			\int_0^\infty g(y^{-1})y^{-3/2}
			K_{it_j}(2\pi|n|y)\,\dd y
			\\
			&=\overline{\rho_j(n)}\mathcal K_ng(t_j).
			\tag*{(2.4)}
		\end{align*}
		Similarly,
		\[
		\left\langle P_{n,g},E\left(\cdot,\frac12+it\right)\right\rangle
		=\overline{\rho_t(n)}\mathcal K_ng(t).
		\]
		Substituting these inner products into the standard spectral expansion,
		evaluating at $z=i$, and then using (2.3) gives (2.2)
		\cite[Chapter 7]{IwaniecSpectral}.
	\end{proof}
	
	\begin{lemma}[Extension by truncation]
		Let $g\in C^\infty(\mathbb R_{>0})$. Suppose that there exists $\delta>0$ such that, for every $j\geq0$ and every $A>0$,
		\begin{align*}
			g^{(j)}(C)&\ll_{A,j}C^A,
			\qquad 0<C\leq1,
			\\
			g^{(j)}(C)&\ll_j C^{-1-\delta-j},
			\qquad C\geq1.
			\tag*{(2.5)}
		\end{align*}
		Then (2.2) remains valid, with the spectral side interpreted as the limit of smooth truncations. If the spectral side converges absolutely, this limit is the series and integral in the usual sense.
	\end{lemma}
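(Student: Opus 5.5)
Approximate $g$ by compactly supported smooth functions, apply Lemma 2.2 to each approximant, and pass to the limit on both sides. Fix a smooth partition $1=\sum_{\ell\ge0}\eta_\ell(C)$ on $\mathbb R_{>0}$, where $\eta_0$ is supported in $(0,2]$ and, for $\ell\ge1$, $\eta_\ell(C)=\eta(C/2^\ell)$ with $\eta$ a fixed bump supported in $[1/2,2]$. Set $g_L=\sum_{\ell\le L}\eta_\ell\,\omega_L\,g$, where $\omega_L$ is a smooth cutoff that vanishes for $C\le 2^{-L}$ and equals $1$ for $C\ge 2^{-L+1}$. Each $g_L$ lies in $C_c^\infty(\mathbb R_{>0})$, so Lemma 2.2 gives (2.2) for $g_L$ exactly. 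The spectral side of (2.2) for $g$ is then defined as the limit of the spectral sides for $g_L$; these are the "smooth truncations".

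On the arithmetic side, convergence follows from the trivial bound $|R(n;C)|\le\#\{r\bmod C:r^2\equiv-1\}\ll_\epsilon C^\epsilon$ and the decay $g(C)\ll C^{-1-\delta}$. Together these make $\sum_C R(n;C)g(C)$ absolutely convergent. Since $R(n;C)$ is only evaluated at integers $C\ge1$, the small-$C$ cutoff $\omega_L$ plays no role once $L\ge1$. Hence $\sum_C R(n;C)g_L(C)\to\sum_C R(n;C)g(C)$ by dominated convergence.

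It remains to show that the spectral sides of the $g_L$ converge. I would argue through the automorphic function, which avoids spectral estimates entirely.
\begin{enumerate}
\item By the unfolding used in (2.3), $P_{n,g_L}(i)$ equals $2\sum_C R(n;C)g_L(C)$, so it converges to the corresponding sum for $g$.
\item The spectral side of (2.2) for $g_L$ is exactly $\frac12 P_{n,g_L}(i)$, obtained from the pointwise spectral expansion in \cite[Chapter 7]{IwaniecSpectral}. That expansion applies because $P_{n,g_L}$ is smooth and of rapid decay at the cusp.
\end{enumerate}
Hence the limit of the truncated spectral sides exists and equals the left side of (2.2). This proves the first assertion.

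For the final assertion, assume the spectral side for $g$ converges absolutely. I would show that $\mathcal K_n g_L(t)\to\mathcal K_n g(t)$ for each $t$, with a majorant uniform in $L$. The large-$C$ tail is controlled by $K_{it}(2\pi|n|/C)\ll(1+|t|)^{A}C^{\epsilon}$, which holds uniformly once $|n|/C$ is small, together with $g(C)\ll C^{-1-\delta}$. The small-$C$ tail is harmless because $K_{it}(x)$ decays exponentially as $x=2\pi|n|/C\to\infty$. The majorant in $t$ comes from integrating by parts in $C$. Using the differential equation satisfied by $K_{it}$ and the derivative bounds $g^{(j)}\ll C^{-1-\delta-j}$ from (2.5), this yields decay of the form $(1+|t|)^{-A}e^{\pi|t|/2}$. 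That decay is matched by the standard bounds $\rho_j(n)\ll e^{\pi t_j/2}t_j^{\epsilon}$ and $|u_j(i)|\ll t_j^{1/2}$ (or by the local Weyl law in mean square), and by the analogous bounds for the Eisenstein data. Dominated convergence in the $j$-sum and the $t$-integral then identifies the limit of the truncations with the series and integral in the usual sense.

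The main obstacle is this uniform majorant. For large $C$ the argument $2\pi|n|/C$ tends to $0$, where $K_{it}$ oscillates like $x^{\pm it}$. Converting this oscillation into decay in $t$ requires care, because the derivative bounds on $g$ give only the weak decay $C^{-1-\delta-j}$ rather than compact support. If that becomes delicate, I would fall back on the first part of the statement alone, which needs no such bound. In that case absolute convergence of the limiting spectral side is used only to justify identifying the truncated sums with the full sums by monotone limits of the positive parts.
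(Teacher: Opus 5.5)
Your argument for the first assertion is correct and is essentially the paper's. You truncate $g$ smoothly, apply Lemma 2.2 to each truncation, and pass to the limit on the geometric side using $R(n;C)\ll_\epsilon C^\epsilon$ and $g(C)\ll C^{-1-\delta}$. The limit of the spectral sides then exists because each one equals the corresponding geometric side.

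The gap is in the second assertion. The majorant that would justify dominated convergence is left unproved, and your fallback does not replace it. Absolute convergence of $\sum_j\overline{\rho_j(n)}u_j(i)\mathcal K_ng(t_j)$ and of the Eisenstein integral says nothing about whether $\sum_j\overline{\rho_j(n)}u_j(i)\bigl(\mathcal K_ng_L(t_j)-\mathcal K_ng(t_j)\bigr)\to0$. The truncation perturbs every spectral coefficient at once, and nothing is monotone in $L$. Some control of the spectral tail that is uniform in the truncation is unavoidable. Note also that the decay you need is $e^{-\pi|t|/2}(1+|t|)^{-A}$, not $e^{+\pi|t|/2}(1+|t|)^{-A}$, since it must offset the growth $e^{\pi t_j/2}$ of $\rho_j(n)$ (and of $1/\xi(1+2it)$ in $\rho_t(n)$).

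The paper obtains this control from Sobolev convergence of the Poincar\'e series. Concretely, $\Delta$ sends the profile $g$ to $4\pi^2n^2C^{-2}g-C^2g''-2Cg'$, which again satisfies (2.5) uniformly for the truncations. Hence $P_{n,g_X}\to P_{n,g}$ in every fixed Sobolev norm, so $\sum_j(1+t_j^2)^{2m}|\langle P_{n,g_X}-P_{n,g},u_j\rangle|^2$ (plus its Eisenstein analogue) tends to $0$. Cauchy--Schwarz with the local Weyl law $\sum_{t_j\le T}|u_j(i)|^2\ll T^2$ then makes evaluation at $i$ continuous. The limit is therefore the spectral expansion of $P_{n,g}$ at $i$, whose coefficients are $\overline{\rho_j(n)}\mathcal K_ng(t_j)$ by unfolding. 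This needs no individual bounds on $\rho_j(n)$, $u_j(i)$ or $K_{it}$.

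Your pointwise route can also be completed, and the obstacle you flag is not the real one. With $G_L(y)=g_L(1/y)y^{-1/2}$ one has $\mathcal K_ng_L(t)=\int_0^\infty G_L(y)K_{it}(2\pi|n|y)\,\dd y/y$. By (2.5), $(y\partial_y)^kG_L\ll y^{1/2+\delta}$ as $y\to0$, uniformly in $L$. This kills the Green-identity boundary terms against $K_{it}(ay)$ and $ay\,K_{it}'(ay)$, which are at most logarithmically large there. So $(1-\mathcal L)^m$, with $\mathcal L=(y\partial_y)^2-(2\pi|n|y)^2$, moves onto $G_L$ exactly as in (8.1.21).

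That step yields only $(1+|t|)^{-2m}$, however. The factor $e^{-\pi|t|/2}$ does not come from integrating by parts, since the pointwise bound $|K_{it}(x)|\le K_0(x)$ loses it, and it is the piece missing from your plan. It comes from the size of $K_{it}$ itself, for instance by Cauchy--Schwarz against $\int_0^\infty|K_{it}(x)|^2x^{2d}\,\dd x/x$ as in (8.1.23), with $0<d<1/2+\delta$. With that majorant, dominated convergence gives the second assertion, and it proves the absolute convergence instead of assuming it.
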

	
	\begin{proof}
		Choose $\omega\in C_c^\infty([0,\infty))$ such that $\omega(x)=1$ for $0\leq x\leq1$, and let
		$g_X(C)=g(C)\omega(C/X)\omega(1/(XC))$. For each $X$, (2.2) may be applied. For any fixed $C>0$, one has $g_X(C)=g(C)$ when $X$ is sufficiently large. Condition (2.5) guarantees that the corresponding Poincar\'e series converge in every Sobolev norm of fixed order. Hence one may take limits in both the spectral projections and the point evaluation at $z=i$. The geometric side is controlled by
		$R(n;C)\ll_\epsilon C^\epsilon$ and (2.5). Letting $X\to\infty$ proves the result.
	\end{proof}
	
	\section{Approximate Functional Equation and Weight Function}
	
	Let $H(s)=e^{s^2}/s$ and $H_u(s)=H(s-u)+H(s+u)$. Take a vertical line to the right of all relevant poles, use the residue theorem to expand a contour around the origin into two vertical lines on the right and left, and then use the functional equation in the integral on the left. Since $H$ has Gaussian decay in the vertical direction, the integrals over the horizontal sides tend to $0$. Shifting the variables in the two integrals separately and moving the contours to $\Re s=1$ gives the exact identity
	\begin{align*}
		&\Lambda_k^*\left(\frac12+u+v\right)
		\Lambda_k^*\left(\frac12+u-v\right)
		\\
		&\qquad=\frac{1}{2\pi i}\int_{(1)}
		H_u(s)\Lambda_k^*\left(\frac12+v+s\right)
		\Lambda_k^*\left(\frac12-v+s\right)\,\dd s.
		\tag*{(3.1)}
	\end{align*}
	During the contour shifts, the poles $s=u$ and $s=-u$ of $H(s-u)$ and $H(s+u)$ always lie to the left of the new contour, while both completed functions are entire. In the subsequent expansion of the Dirichlet series, only $K\leq k\leq2K$ contributes; in this range, the arguments of all Gamma factors also remain uniformly away from their poles.
	
	Define
	\begin{align*}
		V_t(x)
		={}&\frac{1}{2\pi i}\int_{(1)}H_u(s)
		\frac{\Gamma(2t+\frac12+v+s)
			\Gamma(2t+\frac12-v+s)}
		{\Gamma(2t+\frac12)^2}
		\pi^{-2s}x^{-\frac12-s}\,\dd s.
		\tag*{(3.2)}
	\end{align*}
	Substitute (2.1) into (3.1). All Dirichlet series converge absolutely on $\Re s=1$, so
	\begin{align*}
		\mathcal S(u,v,K,\Phi)
		={}&\frac1{16}
		\sum_{m_1,m_2\in\mathbb Z[i]\setminus\{0\}}
		|m_1|^{-2v}|m_2|^{2v}
		\\
		&\qquad\times
		\sum_{k\geq1}\Phi\left(\frac{k}{K}\right)
		e^{4ik\arg(m_1m_2)}V_k(|m_1m_2|^2).
		\tag*{(3.3)}
	\end{align*}
	Let
	\[
	W(z)=\sum_{k\geq1}\Phi\left(\frac{k}{K}\right)
	e^{4ik\arg z}V_k(|z|^2),
	\qquad z\neq0.
	\]
	Thus
	\[
	\mathcal S(u,v,K,\Phi)
	=\frac1{16}
	\sum_{m_1,m_2\in\mathbb Z[i]\setminus\{0\}}|m_1|^{-2v}|m_2|^{2v}W(m_1m_2).
	\tag*{(3.4)}
	\]
	
	\section{Estimates for $W$ and the Main Decomposition}
	
	All arguments are taken in $[-\pi,\pi)$. We first record uniform derivative estimates for $V_t$.
	
	\begin{lemma}
		Let $\sigma\geq1$ be fixed, and let $j\geq0$. For $K\leq t\leq2K$ and $z\neq0$,
		\[
		\frac{\partial^j}{\partial t^j}V_t(|z|^2)
		\ll_{\sigma,j,\epsilon}
		K^{2\sigma-j+\epsilon}|z|^{-1-2\sigma}.
		\tag*{(4.1)}
		\]
	\end{lemma}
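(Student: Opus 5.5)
Shift the contour in (3.2) to $\Re s=\sigma$, differentiate under the integral sign in $t$, and bound the Gamma ratio with Stirling's formula while the Gaussian factor in $H_u$ controls the vertical direction.

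\emph{Contour shift.} Since $H_u$ has poles only at $s=\pm u$, and $|\Re u|\ll(\log K)^{-1}<1\leq\sigma$, we may move the line of integration in (3.2) from $\Re s=1$ to $\Re s=\sigma$ without crossing poles. The Gamma factors $\Gamma(2t+\frac12\pm v+s)$ are holomorphic there because $t\geq K$ is large. The horizontal connecting segments vanish because of the decay $e^{\Re((s\mp u)^2)}$, which beats the polynomial growth of the Gamma ratio. On the new line, $|x^{-\frac12-s}|=|z|^{-1-2\sigma}$ and $|\pi^{-2s}|=\pi^{-2\sigma}$. Moreover $|H_u(s)|\ll e^{-(\Im s)^2/2}$ up to factors $e^{O(|u|^2)}$ coming from the shifts. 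Since $|u|<\log K$, these factors are at most $e^{O((\log K)^2)}$, which is not obviously $K^\epsilon$. However, the Gaussian decay is centered at $\Im s=\pm\Im u$, and writing $s=\sigma+i(\tau\pm\Im u)$ gives $|e^{(s\mp u)^2}|=e^{(\sigma\mp\Re u)^2-\tau^2}$, which is uniformly bounded. The price is that $|\Im s|$ may be as large as $\log K$, so the Gamma ratio must be handled for $|\Im s|\ll\log K+|\tau|$.

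\emph{Derivatives of the Gamma ratio.} Set
\[
G(t,s)=\frac{\Gamma(2t+\frac12+v+s)\Gamma(2t+\frac12-v+s)}{\Gamma(2t+\frac12)^2}.
\]
By the uniform Stirling formula, $\log G(t,s)=(2s)\log(2t)+O\bigl((1+|s|^2+|v|^2)/t\bigr)$ for $|s|,|v|\ll K^{1/2}$, with holomorphic error in $t$ in a complex neighbourhood of radius $\asymp K$. Hence $|G(t,s)|\ll K^{2\sigma}e^{O((|\tau|^2+\log^2K)/K)}\ll K^{2\sigma}$ for $|\tau|\leq K^{1/3}$.

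For the $t$-derivatives, I would apply Cauchy's integral formula on a circle of radius $\asymp K$ around $t$, since $G(\cdot,s)$ is holomorphic there. This gives $\partial_t^jG\ll_j K^{2\sigma-j}$ in the same range. For $|\tau|>K^{1/3}$, the crude bound $|G|\ll(K+|\tau|)^{O(\sigma)}$, together with the same Cauchy-estimate argument, is overwhelmed by $e^{-\tau^2}$.

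\emph{Conclusion and main obstacle.} Integrating over $\tau$ gives
\[
\partial_t^jV_t(|z|^2)\ll K^{2\sigma-j}|z|^{-1-2\sigma}\cdot e^{O(1)},
\]
which implies (4.1), with the $K^\epsilon$ absorbing the powers of $\log K$ from $|u|,|v|<\log K$. The main obstacle is uniformity in the shifts. One must check that Stirling's approximation remains uniform for $|\Im s|,|v|\ll\log K$. One must also check that the Cauchy circle of radius $cK$ avoids Gamma poles, which holds since $\Re(2t+\frac12\pm v+s)\geq K$ there.
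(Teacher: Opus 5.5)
Your proposal is correct and follows the paper's route. You move to $\Re s=\sigma$, apply uniform Stirling to the Gamma ratio with a gain of $K^{-1}$ (up to $K^{\epsilon}$) per $t$-derivative, use the Gaussian decay of $H_u$ centred at $\tau=\pm\Im u$, and read $|z|^{-1-2\sigma}$ off the Mellin factor.

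There is one bookkeeping slip, in the Cauchy-estimate step. On a circle $|t'-t|=cK$ one has $|(2t')^{2s}|=|2t'|^{2\sigma}e^{-2\Im s\,\arg(2t')}$, with $|\Im s|$ as large as $\log K+|\tau|$. So the bound you actually get is $K^{2\sigma-j+O(c)}e^{O(c|\tau|)}$, not $K^{2\sigma-j}$. Likewise, on that circle your crude bound for $|\tau|>K^{1/3}$ should be $\exp\bigl(O((|\tau|+\log K)\log(K+|\tau|))\bigr)$, which follows, e.g., from $\log\Gamma(Z+a)-\log\Gamma(Z)=a\int_0^1\psi(Z+\theta a)\,\mathrm{d}\theta$. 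Taking $c\asymp\epsilon$ and letting $e^{-\tau^2}$ absorb the $\tau$-dependence repairs this and still gives (4.1).
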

	
	\begin{proof}
		Move the contour in (3.2) to $\Re s=\sigma$. The poles $s=\pm u$ lie to the left of the contour and hence are not crossed. On the new contour, apply the uniform form of Stirling's formula to the two Gamma quotients. Each differentiation with respect to $t$ produces a factor $K^{-1+\epsilon}$. Moreover,
		$H_u(\sigma+i\tau)$ has Gaussian decay near $\tau=\pm\Im u$, respectively, so all fixed-order moments of the $\tau$-integral are uniformly controlled. Finally, the Mellin factor gives $|z|^{-1-2\sigma}$. Combining these estimates yields (4.1).
	\end{proof}
	
	\begin{lemma}
		When $|z|>K^{1+\epsilon}$, for every $A>0$,
		\[
		W(z)\ll_{\Phi,\epsilon,A}|z|^{-A}.
		\tag*{(4.2)}
		\]
	\end{lemma}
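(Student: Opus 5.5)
The plan is to bound $W(z)$ termwise, using Lemma 4.1 with a large fixed $\sigma$ and no summation over $k$. By definition,
\[
W(z)=\sum_{k\geq1}\Phi\left(\frac kK\right)e^{4ik\arg z}V_k(|z|^2),
\]
and only $K\leq k\leq 2K$ contribute because $\operatorname{supp}\Phi\subset[1,2]$. There are therefore at most $K+1$ nonzero terms. Each has $|\Phi(k/K)e^{4ik\arg z}|\ll_\Phi 1$, and applying (4.1) with $j=0$ to each gives
\[
V_k(|z|^2)\ll_{\sigma,\epsilon}K^{2\sigma+\epsilon}|z|^{-1-2\sigma}.
\]
Summing trivially over $k$ yields
\[
W(z)\ll_{\Phi,\sigma,\epsilon}K^{1+2\sigma+\epsilon}|z|^{-1-2\sigma}.
\]

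Next we convert this into decay in $|z|$ using the hypothesis $|z|>K^{1+\epsilon}$, which gives $K<|z|^{1/(1+\epsilon)}$. Substituting,
\[
K^{1+2\sigma+\epsilon}|z|^{-1-2\sigma}
\leq |z|^{\frac{1+2\sigma+\epsilon}{1+\epsilon}-1-2\sigma}
=|z|^{-\frac{2\sigma\epsilon}{1+\epsilon}}.
\]
Here the auxiliary $\epsilon$ in (4.1) is taken equal to the $\epsilon$ of the statement; any smaller auxiliary loss only helps.

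Given $A>0$, we choose the fixed $\sigma\geq1$ so that $2\sigma\epsilon/(1+\epsilon)\geq A$. This gives $W(z)\ll_{\Phi,\epsilon,A}|z|^{-A}$, with the implied constant depending on $\sigma=\sigma(A,\epsilon)$ and hence only on $A$ and $\epsilon$.

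The only point needing care is that (4.1) is uniform in the shifts $u,v$ under the standing assumptions, with the $(\log K)^{O(1)}$ losses from the shifts absorbed into $K^\epsilon$. This is already built into Lemma 4.1, so the argument has no real obstacle: no Poisson summation or oscillation in $k$ is needed, since the polynomial decay in $|z|$ from moving the contour far to the right dominates the trivial factor $K^{1+2\sigma}$ once $|z|$ exceeds $K^{1+\epsilon}$.
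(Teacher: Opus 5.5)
Your proposal is correct and is essentially the paper's argument: sum (4.1) with $j=0$ trivially over the $O(K)$ values of $k$ to get (4.3), then use $K<|z|^{1/(1+\epsilon)}$ and take $\sigma$ large in terms of $A$ and $\epsilon$. Your explicit exponent $-2\sigma\epsilon/(1+\epsilon)$ is a cleaner form of the paper's condition $\epsilon(2\sigma+1)>(1+\epsilon)A+2\epsilon$, which it uses to choose $\sigma$.
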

	
	\begin{proof}
		By the case $j=0$ of (4.1),
		\[
		W(z)\ll_{\Phi,\sigma,\epsilon}
		K^{2\sigma+1+\epsilon}|z|^{-1-2\sigma}.
		\tag*{(4.3)}
		\]
		Given $A$, first choose $2\sigma+1>A$, and then take $\sigma$ sufficiently large that
		$\epsilon(2\sigma+1)>(1+\epsilon)A+2\epsilon$. When
		$|z|>K^{1+\epsilon}$, (4.3) then implies (4.2).
	\end{proof}
	
	Define the continuous weight
	\[
	W_1(z)=\int_K^{2K}\Phi\left(\frac{t}{K}\right)
	e^{4it\arg z}V_t(|z|^2)\,\dd t.
	\]
	
	\begin{lemma}
		When $K>100$, for every $A>0$,
		\[
		W_1(z)=
		\begin{cases}
			W(z)+O_{\Phi,\epsilon,A}(K^{-A});
			&-\frac\pi4\leq\arg z<\frac\pi4,\quad 1\le|z|\leq K^{1+\epsilon},\\
			O_{\Phi,\epsilon,A}(K^{-A});
			&\arg z\notin[-\frac\pi4,\frac\pi4),\quad 1\le|z|\leq K^{1+\epsilon},\\
			O_{\Phi,\epsilon,A}(|z|^{-A});
			&|z|>K^{1+\epsilon}.
		\end{cases}
		\tag*{(4.4)}
		\]
	\end{lemma}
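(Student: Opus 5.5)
The plan is to compare the sum over $k$ defining $W(z)$ with the integral over $t$ defining $W_1(z)$ by Poisson summation in $k$. Write $\theta=\arg z\in[-\pi,\pi)$ and
\[
f_z(t)=\Phi\left(\frac tK\right)e^{4it\theta}V_t(|z|^2),
\]
which is smooth and supported in $[K,2K]$. Poisson summation gives
\[
W(z)=\sum_{k\in\mathbb Z}f_z(k)=\sum_{h\in\mathbb Z}\int_K^{2K}\Phi\left(\frac tK\right)e^{it(4\theta-2\pi h)}V_t(|z|^2)\,\dd t.
\]
Equivalently, $W(z)$ is the sum over $h\in\mathbb Z$ of $W_1$ evaluated at an argument shifted by $-\pi h/2$. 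Since $4\theta$ is only defined modulo $2\pi$, the functions $W(z)$ and the sum are both periodic in $\theta$ with period $\pi/2$.

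\textbf{Step 1: nonzero frequencies.} Repeated integration by parts in $t$ controls the terms with frequency $\phi=4\theta-2\pi h\neq0$. Each step produces a factor $|\phi|^{-1}$ and one $t$-derivative of $\Phi(t/K)V_t(|z|^2)$. By (4.1), such a derivative costs $K^{-1+\epsilon}$. After $J$ steps, a term is
\[
\ll K\cdot(|\phi|K^{1-\epsilon})^{-J}\sup_t|V_t(|z|^2)|.
\]
We use (4.1) with $\sigma=1$ together with $1\le|z|\le K^{1+\epsilon}$, and bound $V_t$ crudely by a power of $K$.

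\textbf{Step 2: the cases of (4.4).} If $\theta\in[-\pi/4,\pi/4)$, then $4\theta\in[-\pi,\pi)$. The term $h=0$ is exactly $W_1(z)$. Every $h\neq0$ has $|4\theta-2\pi h|\ge\pi$, so these terms total $O(K^{-A})$, summing over $h$ using the decay in $|h|$. This gives the first case, $W_1=W+O(K^{-A})$.

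If $\theta\notin[-\pi/4,\pi/4)$, then $W_1$ itself has frequency $4\theta$ with $|4\theta|\ge\pi$. Integration by parts alone then gives $W_1(z)=O(K^{-A})$, which is the second case. This case does not need Poisson summation at all: the phase $e^{4it\theta}$ has $|4\theta|\ge\pi$, and the amplitude varies on scale $K^{1-\epsilon}$.

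\textbf{Step 3: large $|z|$.} For $|z|>K^{1+\epsilon}$, bound the integral trivially using (4.1) with large $\sigma$, exactly as in the proof of (4.2). This gives $W_1(z)\ll K^{2\sigma+1+\epsilon}|z|^{-1-2\sigma}\ll|z|^{-A}$.

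The main obstacle is uniformity at the endpoint $\theta\to-\pi/4$ in the second case. There $|4\theta|$ is only $\ge\pi$, not bounded away from $\pi$ from above in a way that matters. Since each integration by parts gains $(\pi K^{1-\epsilon})^{-1}$ regardless, this causes no real loss. One must still check that the boundary terms vanish, which holds because $\Phi$ is compactly supported in $(1,2)$, and that the implied constants in (4.1) are uniform in the shifts. Both facts are supplied by the earlier lemma.
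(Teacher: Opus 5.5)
Your proposal is correct and follows the paper's proof essentially step for step. The shared steps are: Poisson summation in $k$, with the zero frequency equal to $W_1(z)$; the lower bound $|4\theta-2\pi h|\ge\pi(2|h|-1)$ for $h\neq0$, combined with $J$-fold integration by parts via (4.1), giving $\ll K^{3-J+\epsilon}|4\theta-2\pi h|^{-J}$; the same integration by parts applied directly to $W_1$ when $|4\theta|\ge\pi$; and the trivial bound from (4.1) with large $\sigma$ when $|z|>K^{1+\epsilon}$. One cosmetic point: in Step 1 the correct input is the (4.1) bound $\partial_t^J\bigl(\Phi(t/K)V_t(|z|^2)\bigr)\ll K^{2-J+\epsilon}$ (with $\sigma=1$, $|z|\ge1$), not $\sup_t|V_t|$ times a loss of $K^{\epsilon}$ per derivative, though this is effectively how you use it.
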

	
	\begin{proof}
		Write $\theta=\arg z$ and
		$A_z(t)=\Phi(t/K)V_t(|z|^2)$, and extend $A_z$ to a smooth compactly supported function on the real line. By (4.1) and the Leibniz rule, for every $j\geq0$,
		\[
		\int_{\mathbb R}|A_z^{(j)}(t)|\,\dd t
		\ll_{\Phi,j,\epsilon}K^{3-j+\epsilon},
		\qquad 1\leq|z|\leq K^{1+\epsilon}.
		\tag*{(4.5)}
		\]
		Applying the Poisson summation formula over integers $k$ gives
		\begin{align*}
			W(z)&=\sum_{\ell\in\mathbb Z}I_\ell(z),
			\\
			I_\ell(z)&=\int_{\mathbb R}A_z(t)
			e^{i(4\theta-2\pi\ell)t}\,\dd t.
			\tag*{(4.6)}
		\end{align*}
		where $I_0(z)=W_1(z)$. If $4\theta-2\pi\ell\neq0$, integrating by parts $j$ times gives
		\[
		I_\ell(z)\ll_{\Phi,j,\epsilon}
		K^{3-j+\epsilon}|4\theta-2\pi\ell|^{-j}.
		\tag*{(4.7)}
		\]
		When $-\pi/4\leq\theta<\pi/4$ and $\ell\neq0$,
		$|4\theta-2\pi\ell|\geq\pi(2|\ell|-1)$. Summing over $\ell$ and taking $j$ sufficiently large gives
		$W(z)-W_1(z)\ll K^{-A}$. When $\theta$ does not lie in this angular interval,
		$|4\theta|\geq\pi$, so applying the same integration by parts to $I_0$ gives
		$W_1(z)\ll K^{-A}$.
		
		Finally, if $|z|>K^{1+\epsilon}$, then by (4.1) and the fact that the length of the integration interval is $K$,
		\[
		W_1(z)\ll_{\Phi,\sigma,\epsilon}
		K^{2\sigma+1+\epsilon}|z|^{-1-2\sigma}.
		\]
		Choosing $\sigma$ as in (4.2) gives the third case.
	\end{proof}
	
	\subsection{From $W$ to $(D)+(OD)$}
	
	Let
	$\mathfrak S=\{z\neq0:-\pi/4\leq\arg z<\pi/4\}$. For any Gaussian unit
	$\varepsilon\in\{\pm1,\pm i\}$, since $k$ is an integer, we have
	$W(\varepsilon z)=W(z)$. In each quadruple
	$(m_1,m_2)\mapsto(\varepsilon m_1,m_2)$, exactly one product lies in $\mathfrak S$. Therefore, (3.4) can be rewritten exactly as
	\[
	\mathcal S(u,v,K,\Phi)
	=\frac14
	\sum_{\substack{m_1,m_2\in\mathbb Z[i]\setminus\{0\}\\m_1m_2\in\mathfrak S}}
	|m_1|^{-2v}|m_2|^{2v}W(m_1m_2).
	\tag*{(4.8)}
	\]
	
	\begin{proposition}
		For every $A>0$,
		\begin{align*}
			\mathcal S(u,v,K,\Phi)
			={}&\frac14
			\sum_{\substack{m_1,m_2\in\mathbb Z[i]\setminus\{0\}\\m_1m_2\in\mathbb N^*}}
			|m_1|^{-2v}|m_2|^{2v}W_1(m_1m_2)
			\\
			&+\frac14
			\sum_{\substack{m_1,m_2\in\mathbb Z[i]\setminus\{0\}\\\Im(m_1m_2)\neq0}}
			|m_1|^{-2v}|m_2|^{2v}W_1(m_1m_2)
			+O_{\Phi,\epsilon,A}(K^{-A}).
			\tag*{(4.9)}
		\end{align*}
	\end{proposition}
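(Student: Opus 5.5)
The plan is to start from the exact identity (4.8) and make two changes, each costing only $O(K^{-A})$. First, replace $W$ by $W_1$ on the sector $\mathfrak S$. Second, extend the summation from $m_1m_2\in\mathfrak S$ to all pairs $(m_1,m_2)$ with $m_1m_2\notin\mathbb R_{<0}$, i.e.\ to all products that are either positive integers or non-real. The two sums in (4.9) are exactly this partition: a product in $\mathfrak S$ is either real, and then lies in $\mathbb N^*$ because $\arg z=0$ is the only real direction in $[-\pi/4,\pi/4)$, or it has $\Im(m_1m_2)\neq0$. So everything reduces to showing that the discarded and added terms are negligible.

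\textbf{Counting bound.} We need a bound for sums of the shape $\sum_{m_1,m_2}|m_1|^{-2v}|m_2|^{2v}F(m_1m_2)$. Since $|\Re v|\ll(\log K)^{-1}$ and $|m_1|,|m_2|\le|m_1m_2|$, the weight satisfies
\[
\bigl||m_1|^{-2v}|m_2|^{2v}\bigr|\le |m_1m_2|^{2|\Re v|}\ll |m_1m_2|^{\epsilon},
\]
and on $|m_1m_2|\le K^{1+\epsilon}$ it is actually $O(1)$. For a fixed nonzero $z\in\mathbb Z[i]$, the number of factorizations $z=m_1m_2$ is at most $4d(z)\ll|z|^\epsilon$. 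Hence the contribution of the products $z$ with $1\le|z|\le K^{1+\epsilon}$ and $|F(z)|\ll K^{-A}$ is
\[
\ll K^{2+4\epsilon}\cdot K^{-A},
\]
and the contribution of the products with $|z|>K^{1+\epsilon}$ and $|F(z)|\ll|z|^{-A}$ is
\[
\ll\sum_{|z|>K^{1+\epsilon}}|z|^{-A+2\epsilon}\ll K^{-A'}.
\]
Here $A'$ can be made as large as we like by enlarging $A$, since $A$ is arbitrary. Note also that $|z|\ge1$ automatically for nonzero Gaussian integers, so (4.4) always applies.

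\textbf{Step 1 (inside $\mathfrak S$).} For $1\le|z|\le K^{1+\epsilon}$ with $z\in\mathfrak S$, the first case of (4.4) gives $W(z)=W_1(z)+O(K^{-A})$. For $|z|>K^{1+\epsilon}$, Lemma 4.2 bounds $W(z)$ and the third case of (4.4) bounds $W_1(z)$, both by $O(|z|^{-A})$. The counting bound then shows that replacing $W$ by $W_1$ in (4.8) costs $O(K^{-A})$.

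\textbf{Step 2 (outside $\mathfrak S$).} We add the terms with $m_1m_2\notin\mathfrak S$ and $\Im(m_1m_2)\neq0$. For these, the second and third cases of (4.4) give $W_1=O(K^{-A})$ on $|z|\le K^{1+\epsilon}$ and $W_1=O(|z|^{-A})$ beyond it. The counting bound again shows that these terms are $O(K^{-A})$. We do not add the negative real products; those are simply absent from (4.9), which is consistent with the decomposition above.

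\textbf{Main obstacle.} The only point requiring care is uniformity. The implied constants in (4.4) must be uniform in the shifts $u,v$, and the weight $|m_1|^{-2v}|m_2|^{2v}$ must not spoil convergence. The first is guaranteed by the conventions of Section 1.1 and by the proofs of Lemmas 4.1--4.3. The second follows from $|\Re v|\ll(\log K)^{-1}$: on $|z|\le K^{1+\epsilon}$ the weight is bounded, and in the tail its growth $|z|^{\epsilon}$ is absorbed by the arbitrary power saving.
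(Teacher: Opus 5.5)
Your proposal is correct and follows the paper's proof. You replace $W$ by $W_1$ on $\mathfrak S$ using (4.4), Lemma 4.2, the Gaussian divisor bound and the weight bound $|m_1m_2|^{2|\Re v|}$, and then use the second and third cases of (4.4) to add back the off-sector terms. The only difference is bookkeeping: the paper first extends to the full sum (4.12) and then discards the negative real products via $\arg=-\pi$, whereas you simply never add them.
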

	
	Denote the first and second terms on the right-hand side of (4.9) by diagonal term $(D)$ and off-diagonal term $(OD)$, respectively. More precisely,
	\begin{align*}
		(D)&=\sum_{n\geq1}r_v(n)W_1(n),
		\\
		r_v(n)&=\frac14
		\sum_{\substack{m_1,m_2\in\mathbb Z[i]\setminus\{0\}\\m_1m_2=n}}
		|m_1|^{-2v}|m_2|^{2v},
		\tag*{(4.10)}
	\end{align*}
	and
	\begin{align*}
		(OD)&=\frac14\sum_{n\in\mathbb Z\setminus\{0\}}S_n,
		\\
		S_n&=\sum_{\substack{m_1,m_2\in\mathbb Z[i]\setminus\{0\}\\\Im(m_1m_2)=n}}
		|m_1|^{-2v}|m_2|^{2v}W_1(m_1m_2).
		\tag*{(4.11)}
	\end{align*}
	
	\begin{proof}
		First replace $W$ by $W_1$ in (4.8). When $|m_1m_2|\leq K^{1+\epsilon}$, (4.4) gives an error $O(K^{-B})$ term by term. There are $O(K^{2+2\epsilon})$ Gaussian integers of norm at most $K^{1+\epsilon}$, the number of factorizations of each Gaussian integer is $O_\epsilon(|z|^\epsilon)$, and the absolute value of the weight is at most $|z|^{2|\Re v|}$. Taking $B$ sufficiently large, the total error from this part is $O(K^{-A})$. When $|m_1m_2|>K^{1+\epsilon}$, (4.2) and (4.4) give decay of arbitrary polynomial order, so the tail is also $O(K^{-A})$.
		
		Therefore, the restriction to the angular interval may be removed, giving
		\[
		\mathcal S(u,v,K,\Phi)
		=\frac14\sum_{m_1,m_2\in\mathbb Z[i]\setminus\{0\}}
		|m_1|^{-2v}|m_2|^{2v}W_1(m_1m_2)
		+O_{\Phi,\epsilon,A}(K^{-A}).
		\tag*{(4.12)}
		\]
		If $m_1m_2$ is a negative real number, then $\arg(m_1m_2)=-\pi$, so its contribution is absorbed into the error term by (4.4). The part on the positive real axis is exactly (4.10), and the part off the real axis is exactly (4.11). This proves (4.9).
	\end{proof}
	
	\section{Evaluation of $(D)$}
	
	Set
	\[
	\mathcal J(s)=\int_K^{2K}\Phi\left(\frac{t}{K}\right)
	\frac{\Gamma(2t+\frac12+v+s)
		\Gamma(2t+\frac12-v+s)}
	{\Gamma(2t+\frac12)^2}\,\dd t.
	\tag*{(5.1)}
	\]
	
	\begin{proposition}
		\begin{align*}
			(D)
			={}&K\left(\frac{2K}{\pi}\right)^{2u}\widetilde\Phi(2u)
			\frac{\zeta(1+2v+2u)\zeta(1-2v+2u)L_0(1+2u)}
			{\zeta(2+4u)}
			\\
			&+K\left(\frac{2K}{\pi}\right)^{-2u}\widetilde\Phi(-2u)
			\frac{\zeta(1+2v-2u)\zeta(1-2v-2u)L_0(1-2u)}
			{\zeta(2-4u)}
			\\
			&+\frac K2\left(\frac{2K}{\pi}\right)^{2v}
			\widetilde\Phi(2v)H_u(v)
			\frac{\zeta(1+4v)L_0(1+2v)}{\zeta(2+4v)}
			\\
			&+\frac K2\left(\frac{2K}{\pi}\right)^{-2v}
			\widetilde\Phi(-2v)H_u(-v)
			\frac{\zeta(1-4v)L_0(1-2v)}{\zeta(2-4v)}
			\\
			&+O_{\Phi,\epsilon}\left(K^{\frac12+\epsilon}\right).
			\tag*{(5.2)}
		\end{align*}
	\end{proposition}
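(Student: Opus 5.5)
We evaluate $(D)$ by Mellin inversion. Inserting (3.2) into the definition of $W_1$ and interchanging the $t$- and $s$-integrals (justified by Gaussian decay of $H_u$ and absolute convergence on $\Re s=1$), we get
\[
(D)=\frac{1}{2\pi i}\int_{(1)}H_u(s)\,\pi^{-2s}\mathcal J(s)\,D_v\!\left(\tfrac12+s\right)\dd s,
\qquad D_v(w)=\sum_{n\ge1}\frac{r_v(n)}{n^{w}},
\]
where $\mathcal J$ is as in (5.1). The first step is to compute the Dirichlet series $D_v$. Factor over ideals: since $m_1m_2=n$ real positive and units contribute a factor $4$ after the $\frac14$ normalization, $D_v(w)$ becomes the sum over pairs of ideals $(\mathfrak a,\mathfrak b)$ with $\mathfrak a\mathfrak b=(n)$ of $N\mathfrak a^{-v}N\mathfrak b^{v}n^{-w}$. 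This is a local computation prime by prime in $\mathbb Z[i]$: at split $p=\pi\bar\pi$ the condition forces $\mathfrak a\mathfrak b$ to be invariant under conjugation; at inert $p$ and at $1+i$ it is simpler. The resulting Euler product should match
\[
D_v(w)=\frac{\zeta(2w+2v)\,\zeta(2w-2v)\,L_0(2w)}{\zeta(4w)}\times(\text{possibly a finite correction at }2),
\]
and evaluating at $w=\frac12+s$ gives exactly the arithmetic factors in (5.2). I would verify the factor at $p=2$ and at inert primes carefully, since this is where the normalization constants are decided.

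Next, compute $\mathcal J(s)$ by Stirling: for $K\le t\le2K$ and $s$ in a bounded-real-part strip,
\[
\frac{\Gamma(2t+\frac12+v+s)\Gamma(2t+\frac12-v+s)}{\Gamma(2t+\frac12)^2}=(2t)^{2s}\bigl(1+O(|s|^2K^{-1})\bigr),
\]
so $\mathcal J(s)=K(2K)^{2s}\widetilde\Phi(2s)+O(K^{2\Re s}(1+|s|)^{C})$. Then shift the contour from $\Re s=1$ to $\Re s=-\frac14$ (or to $\Re s=-\frac14+\epsilon$). Two families of poles are crossed. First, the poles $s=\pm u$ of $H_u$ have residue $1$ each, and they give the first two lines of (5.2): at $s=u$ the factor is $K(2K/\pi)^{2u}\widetilde\Phi(2u)$ times $D_v(\frac12+u)$, and at $s=-u$ the factor is the mirrored one. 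Second, the poles of $\zeta(1+2s\pm2v)$ at $s=\mp v$ give residues $\frac12 H_u(\mp v)\,K(2K/\pi)^{\mp2v}\widetilde\Phi(\mp2v)$ times the remaining Euler factor $\zeta(1\mp4v)L_0(1\mp2v)/\zeta(2\mp4v)$. These are exactly lines three and four of (5.2). The pole of $L_0(1+2s)$ at $s=0$ is cancelled by the zero-free $H_u$? No: $H_u$ has no pole at $0$, but $\zeta$ in $L_0(1+2s)$ does. I would check this carefully. Here I expect the residue at $s=0$ to be $O(K\cdot)$ unless it combines with something. My plan is to redo the Euler product to see whether the factor is really $L(2w,\chi_{-4})\zeta(2w+2v)\zeta(2w-2v)/\zeta(4w)$ times a stray $\zeta$, because the $u$-residue terms already contain $L_0(1+2u)$, which includes $\zeta(1+2u)$. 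If a genuine pole at $s=0$ exists, its residue is $K\widetilde\Phi(0)\cdot(\dots)H_u(0)$, and $H_u(0)=H(-u)+H(u)=0$ since $H$ is odd. So it vanishes exactly, which settles the issue.

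Finally, bound the remaining integral on $\Re s=-\frac14$. We have $\mathcal J(s)\ll K\cdot K^{-1/2}(1+|s|)^C$, and $D_v(\frac14+i\tau)$ is polynomially bounded in $\tau$ by convexity for $\zeta$ and $L(\cdot,\chi_{-4})$. The denominator $\zeta(4w)=\zeta(1+4s)$ sits at real part $0$, which is dangerous. I would therefore stop instead at $\Re s=-\frac14+\epsilon$, where $\zeta(2+4s)^{-1}$ has $\Re(2+4s)=1+4\epsilon$ and so is bounded. With the Gaussian decay of $H_u$ uniform in the shifts (the shifts are at most $\log K$, with the loss absorbed into $K^\epsilon$), the integral is $O(K^{1/2+\epsilon})$, as is the Stirling error term. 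The main obstacle is the exact local computation of $D_v$ together with confirming that the residue at $s=0$ vanishes.
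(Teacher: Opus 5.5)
Your proposal is correct and follows the paper's proof essentially step for step. Both run Mellin inversion on $\Re s=1$, use the Euler product (5.5), and shift to $\Re s=-\tfrac14$, collecting the residues at $s=\pm u,\pm v$ (the pole at $s=0$ is killed by $H_u(0)=0$), with Stirling for $\mathcal J$ at the residue points and on the shifted line.

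A few small corrections:
\begin{itemize}
\item The Dirichlet series that actually appears is $\sum_n r_v(n)n^{-1-2s}$. Your displayed formula for $D_v(w)$ is the one for $\sum_n r_v(n)n^{-2w}$, not for $\sum_n r_v(n)n^{-w}$.
\item The local computation gives the single factor $(1-X^2)/\bigl((1-XY)(1-XY^{-1})(1-X)(1-\chi_{-4}(p)X)\bigr)$ at split, inert and ramified primes alike, so there is no correction at $2$.
\item There is no need to stop at $-\tfrac14+\epsilon$. On $\Re s=-\tfrac14$ the denominator is $\zeta(2+4s)$, not $\zeta(1+4s)$, and it lies on the zero-free $1$-line.
\end{itemize}
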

	
	\begin{proof}
		By (3.2) and (4.10), interchanging the absolutely convergent sum and integral on $\Re s=1$ gives
		\[
		(D)=\frac{1}{2\pi i}\int_{(1)}
		\mathcal J(s)H_u(s)\pi^{-2s}
		\sum_{n\geq1}\frac{r_v(n)}{n^{1+2s}}\,\dd s.
		\tag*{(5.3)}
		\]
		The factor $1/4$ exactly cancels the fourfold counting caused by the Gaussian units, and hence
		\[
		r_v(n)=\sum_{\mathfrak a\mathfrak b=(n)}
		N(\mathfrak a)^{-v}N(\mathfrak b)^v.
		\]
		In particular, $r_v$ is multiplicative. For a prime $p$, let
		$X=p^{-1-2s}$ and $Y=p^{2v}$. Enumerating the ideal divisors according to the splitting type of $p$ in $\mathbb Q(i)$ gives
		\[
		r_v(p^\nu)=
		\begin{cases}
			\displaystyle\sum_{a,b=0}^{\nu}Y^{\nu-a-b},
			&p\equiv1\bmod4,\\
			\displaystyle\sum_{a=0}^{\nu}Y^{\nu-2a},
			&p\equiv3\bmod4,\\
			\displaystyle\sum_{a=0}^{2\nu}Y^{\nu-a},
			&p=2.
		\end{cases}
		\]
		All three local generating functions can be written as
		\[
		\sum_{\nu\geq0}r_v(p^\nu)X^\nu
		=\frac{1-X^2}
		{(1-XY)(1-XY^{-1})(1-X)(1-\chi_{-4}(p)X)}.
		\tag*{(5.4)}
		\]
		Multiplying over all primes, we obtain
		\[
		\sum_{n\geq1}\frac{r_v(n)}{n^{1+2s}}
		=\frac{\zeta(1+2v+2s)\zeta(1-2v+2s)L_0(1+2s)}
		{\zeta(2+4s)}.
		\tag*{(5.5)}
		\]
		
		Substitute (5.5) into (5.3), and then shift the contour of integration to $\Re s=-1/4$. In the strip traversed, the possible poles are at $s=\pm u$ and $s=\pm v$, while the pole at $s=0$ is canceled by $H_u(0)=0$.
		
		On $\Re s=-1/4$, Stirling's formula gives
		$\mathcal J(s)\ll_\Phi K^{1/2}(1+|s|)^B$. Combining the polynomial growth of the zeta function and the Dirichlet $L$-function in vertical strips with the Gaussian decay of $H_u$, we obtain
		\[
		\frac{1}{2\pi i}\int_{(-1/4)}
		\mathcal J(s)H_u(s)\pi^{-2s}
		\frac{\zeta(1+2v+2s)\zeta(1-2v+2s)L_0(1+2s)}
		{\zeta(2+4s)}\,\dd s
		\ll_{\Phi,\epsilon}K^{\frac12+\epsilon}.
		\tag*{(5.6)}
		\]
		For $s\in\{u,-u,v,-v\}$, the uniform version of Stirling's formula gives
		\[
		\mathcal J(s)
		=2^{2s}K^{1+2s}\widetilde\Phi(2s)
		+O_{\Phi,\epsilon}\left(K^{2\Re s+\epsilon}\right).
		\tag*{(5.7)}
		\]
		By the lower bounds on the shifts, the remaining factors in the four residues cost at most a fixed power of $\log K$. Computing the residues at $s=\pm u,\pm v$ directly and then using (5.7), the resulting total error is $O_{\Phi,\epsilon}(K^\epsilon)$. Combining this with (5.6) proves (5.2).
	\end{proof}
	\section{Decomposition of $(OD)$}
	
	By (4.11),
	\[
	(OD)=\frac14\sum_{n\neq0}S_n.
	\]
	The rapid decay from Section 4 and the divisor bound in the Gaussian integers show that this series converges absolutely. Indeed, if $z=m_1m_2$, then the absolute value of the weight is at most
	$|z|^{2|\Re v|}$, while the number of factorizations of $z$ in the Gaussian integers is
	$O_\delta(|z|^\delta)$.
	
	Let $m_1=a-ib$ and $m_2=c+id$. Then
	\[
	m_1m_2=ac+bd+i(ad-bc).
	\]
	Thus
	\[
	S_n=
	\sum_{\substack{a,b,c,d\in\mathbb Z\\ad-bc=n}}
	(a^2+b^2)^{-v}(c^2+d^2)^v
	W_1(ac+bd+in).
	\tag*{(6.1)}
	\]
	
	Let $h=(a,b)>0$, and write $a=h\alpha$ and $b=h\beta$, where
	$(\alpha,\beta)=1$. Since $ad-bc=n$, we have $h\mid|n|$. Set
	\[
	\eta=\frac{n}{h},
	\qquad C=\alpha^2+\beta^2,
	\qquad \ell=\alpha c+\beta d.
	\]
	Solving the simultaneous equations $\alpha d-\beta c=\eta$ and $\alpha c+\beta d=\ell$ gives
	\[c=\frac{\alpha\ell-\beta\eta}{C},\quad
	d=\frac{\beta\ell+\alpha\eta}{C}.
	\tag*{(6.2)}\]
	At the same time,
	\begin{align*}
		a^2+b^2&=h^2C,
		\\
		ac+bd&=h\ell,
		\\
		c^2+d^2&=\frac{\ell^2+\eta^2}{C}
		=\frac{h^2\ell^2+n^2}{h^2C}.
		\tag*{(6.3)}
	\end{align*}
	
	When $C>1$, the condition $(\alpha,\beta)=1$ implies
	$(\alpha,C)=(\beta,C)=1$. Define
	$r\equiv\beta\alpha^{-1}\bmod C$. Since
	$\alpha^2+\beta^2=C$, we have $r^2\equiv-1\bmod C$. By (6.2),
	\[
	c,d\in\mathbb Z
	\quad\Longleftrightarrow\quad
	\ell\equiv\eta r\bmod C.
	\tag*{(6.4)}
	\]
	The same conclusion also holds when $C=1$.
	
	We next check the multiplicity. Let
	\[
	\mathcal P(C)=
	\{(\alpha,\beta)\in\mathbb Z^2:
	(\alpha,\beta)=1,\ \alpha^2+\beta^2=C\}.
	\]
	The map $(\alpha,\beta)\mapsto r$ takes the same value on each orbit under the four unit rotations. Conversely, given
	$r^2\equiv-1\bmod C$, the ideal $I_r=(C,r-i)$ has norm $C$. Writing
	$I_r=(\alpha+i\beta)$ gives $\alpha^2+\beta^2=C$. Moreover,
	$r-i\in I_r$ ensures that $(\alpha,\beta)=1$. The generators of a given ideal differ by precisely a unit. Therefore
	\[
	\mathcal P(C)/\{\pm1,\pm i\}
	\longleftrightarrow
	\{r\bmod C:r^2\equiv-1\bmod C\}
	\tag*{(6.5)}
	\]
	is a bijection.
	
	For $n\neq0$, define
	\[
	F_n(x)=(x^2+n^2)^vW_1(x+in),
	\tag*{(6.6)}
	\]
	where complex powers of positive real numbers are defined using the real logarithm. By (6.3),
	\[
	(a^2+b^2)^{-v}(c^2+d^2)^vW_1(ac+bd+in)
	=h^{-4v}C^{-2v}F_n(h\ell).
	\tag*{(6.7)}
	\]
	Substituting (6.4), (6.5), and (6.7) into (6.1), we obtain
	\[
	S_n=4\sum_{h\mid|n|}h^{-4v}
	\sum_{C\geq1}C^{-2v}
	\sum_{\substack{r\bmod C\\r^2\equiv-1\bmod C}}
	\sum_{\ell\equiv\eta r\, (\bmod C)}F_n(h\ell),
	\qquad \eta=\frac{n}{h}.
	\tag*{(6.8)}
	\]
	
	By (3.2), (4.1), and repeated integration by parts with respect to $t$, for every
	$j,A\geq0$ there is a constant $B_{A,j}$ such that
	\[
	F_n^{(j)}(x)
	\ll_{A,j,\Phi,\epsilon}
	K^{B_{A,j}}(1+|x|+|n|)^{-A}.
	\tag*{(6.9)}
	\]
	Here we shift the Mellin contour when $|x+in|$ is large, and integrate by parts with respect to $t$ when the argument is bounded away from $0$. Thus $F_n$ is a Schwartz function. We use the Fourier transform convention
	\[
	\widehat F_n(y)=\int_{\mathbb R}F_n(x)e(-xy)\,\dd x.
	\]
	Applying Poisson summation on arithmetic progressions to $\ell$ in (6.8),
	\begin{align*}
		\sum_{\ell\equiv\eta r\, (\bmod C)}F_n(h\ell)
		&=\sum_{m\in\mathbb Z}F_n\bigl(h(\eta r+Cm)\bigr)
		\\
		&=\frac{1}{hC}\sum_{q\in\mathbb Z}
		e\left(\frac{q\eta r}{C}\right)
		\widehat F_n\left(\frac{q}{hC}\right).
		\tag*{(6.10)}
	\end{align*}
	we obtain
	\[
	S_n=4\sum_{h\mid|n|}h^{-1-4v}
	\sum_{C\geq1}\sum_{q\in\mathbb Z}
	\frac{R(q\eta;C)}{C^{1+2v}}
	\widehat F_n\left(\frac{q}{hC}\right).
	\tag*{(6.11)}
	\]
	
	Here, for fixed $n,h,C$, the $q$-series converges absolutely by (6.9). Summing over $q$ first, in this order, (6.10) recovers the sum over the arithmetic progression in (6.8). The latter is a reparametrization of the original absolutely convergent series (6.1), so the subsequent summation over $C$ is justified.
	
	Substituting (6.11) back into (4.11) and then using $n=h\eta$, we obtain
	\[
	(OD)=\sum_{h\geq1}\sum_{\eta\neq0}h^{-1-4v}
	\sum_{C\geq1}\sum_{q\in\mathbb Z}
	\frac{R(q\eta;C)}{C^{1+2v}}
	\widehat F_{h\eta}\left(\frac{q}{hC}\right).
	\tag*{(6.12)}
	\]
	
	By the $v\leftrightarrow -v$ symmetry recorded after (1.2), it is enough
	to work with $\Re v>0$. In this half-plane,
	$R(0;C)\ll_\delta C^\delta$, so choosing
	$0<\delta<2\Re v$ makes the $C$-series in the $q=0$ contribution absolutely convergent. Moreover, by (6.9),
	$\widehat F_n(0)$ decays rapidly with respect to $|n|$. We may therefore define
	\[
	(OD)_0=
	\sum_{h\geq1}\sum_{\eta\neq0}h^{-1-4v}
	\sum_{C\geq1}\frac{R(0;C)}{C^{1+2v}}
	\widehat F_{h\eta}(0),
	\tag*{(6.13)}
	\]
	and
	\[
	(OD)^*=\sum_{h\geq1}\sum_{\eta\neq0}h^{-1-4v}
	\sum_{C\geq1}\sum_{q\neq0}
	\frac{R(q\eta;C)}{C^{1+2v}}
	\widehat F_{h\eta}\left(\frac{q}{hC}\right).
	\tag*{(6.14)}
	\]
	When $\Re v>0$,
	\[
	(OD)=(OD)_0+(OD)^*.
	\tag*{(6.15)}
	\]
	Here (6.14) is to be interpreted with the summations taken in the order $q,C,\eta,h$. The continuation to the target region cannot be obtained merely by formal analytic continuation. One must first introduce a smooth cutoff in $C$ within the region of absolute convergence, then use the spectral expansion and the extension from truncated sums in Section 2, and finally prove uniform convergence on the spectral side. This is carried out in Section 8. Section 7 treats only the already absolutely convergent term $(OD)_0$.
	
	\section{Evaluation of $(OD)_0$}
	
	\begin{proposition}
		In the target range of shifts, the meromorphic continuation of $(OD)_0$ satisfies
		\begin{align*}
			(OD)_0
			={}&K\left(\frac{2K}{\pi}\right)^{2v}\widetilde\Phi(2v)
			\frac{L_0(1+2v)\zeta(1+2v+2u)\zeta(1+2v-2u)}
			{\zeta(2+4v)}
			\\
			&-\frac K2\left(\frac{2K}{\pi}\right)^{2v}
			\widetilde\Phi(2v)H_u(v)
			\frac{L_0(1+2v)\zeta(1+4v)}{\zeta(2+4v)}
			+O_{\Phi,\epsilon}(K^\epsilon).
			\tag*{(7.1)}
		\end{align*}
	\end{proposition}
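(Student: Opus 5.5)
We evaluate $(OD)_0$ by separating the three arithmetic factors in (6.13) and reducing the $\eta$-sum to a Mellin integral of the same shape as in Section 5. First, since $R(0;C)$ is the number of roots of $r^2\equiv-1\bmod C$, which by (6.5) equals $\frac14|\mathcal P(C)|$, we have the Dirichlet series identity
\[
\sum_{C\geq1}\frac{R(0;C)}{C^{s}}=\frac{L_0(s)}{\zeta(2s)},
\]
since the primitive representations count ideals with no rational integer factor. Applied at $s=1+2v$ this gives $L_0(1+2v)/\zeta(2+4v)$.

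Next we compute $\widehat F_n(0)=\int_{\mathbb R}(x^2+n^2)^vW_1(x+in)\,\dd x$. Inserting the definitions of $W_1$ and $V_t$ from (3.2), we have
\[
W_1(z)=\frac{1}{2\pi i}\int_{(1)}H_u(s)\pi^{-2s}\int_K^{2K}\Phi(t/K)e^{4it\arg z}\frac{\Gamma(2t+\frac12+v+s)\Gamma(2t+\frac12-v+s)}{\Gamma(2t+\frac12)^2}|z|^{-1-2s}\,\dd t\,\dd s.
\]
We then sum over $n=h\eta$ with $\eta\neq0$, together with the weight $h^{-1-4v}$. We pass to polar coordinates $x+in=\rho e^{i\theta}$. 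The sum over $n$ of the integral over $x$ is, up to a Riemann-sum error, an area integral, and it is more convenient to keep it exact. For fixed $h$ and $\eta$, we write $\int_{\mathbb R}f(x+ih\eta)\,\dd x$ and sum over $\eta$ by Poisson summation in $\eta$. The zero Poisson frequency gives $h^{-1}\int_{\mathbb C}f\,\dd A$, where the integral runs over the half-planes $\Im\neq0$, i.e. over all of $\mathbb C$. The nonzero frequencies are handled by integrating by parts in $t$ against the phase $e^{4it\theta}$, using (6.9) and Lemma 4.3. These are negligible except for a contribution from the real axis $\eta=0$, which must be subtracted because $\eta\neq0$ is excluded. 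That subtracted diagonal-like term is exactly what produces the second term in (7.1).

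For the main part, the angular integral $\int_{-\pi}^{\pi}e^{4it\theta}\,\dd\theta$ vanishes for $t\notin\frac14\mathbb Z$, so the true area contribution must instead be read off as follows. The $\eta$-sum over a full lattice line family is handled by Mellin inversion. Concretely, $\sum_{\eta\neq0}\widehat F_{h\eta}(0)$ is a Mellin integral involving $\sum_{\eta}|h\eta|^{2v-2s}\zeta$-type factors. We expect $\sum_{h}h^{-1-4v}\sum_\eta$ to produce $\zeta(1+2v+2u)\zeta(1+2v-2u)$ after shifting contours. The pole of the $s$-integrand at $s=v$ (from $H_u$ paired with $2\zeta(2s-2v)$) and the poles at $s=\pm u$ give the stated main terms via the Stirling evaluation (5.7) of $\mathcal J$, producing $K(2K/\pi)^{2v}\widetilde\Phi(2v)$. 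We would carry this out by writing $\widehat F_n(0)$ as a Mellin integral in $s$ with kernel $|n|^{2v-2s}\int(1+y^2)^{v-1/2-s}e^{4it\arctan(1/y)}\,\dd y$. The resulting $t$-integral is handled by stationary phase, or, more cleanly, by noting that after integration by parts in $t$ only the endpoint/degenerate region of $y$ of size $1/K$ survives. That region gives the factor $K^{2v}$ from $(K/y)$-scaling.

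The main obstacle is this last step: extracting the exact Mellin structure of the oscillatory $x$-integral. We must show that, after summing over $\eta$ and $h$, one gets precisely $\zeta(1+2v+2u)\zeta(1+2v-2u)$ from poles at $s=\pm u$ with residue weights reassembling to $\widetilde\Phi(2v)$. We must also show that the compensating term $-\frac{K}{2}(2K/\pi)^{2v}\widetilde\Phi(2v)H_u(v)\zeta(1+4v)L_0(1+2v)/\zeta(2+4v)$ arises from the pole at $s=v$ (where $\zeta(1-2v+2s)$ and the $\eta$-zeta coincide). We would first try to compute the $y$-integral at leading order in $1/K$ via the substitution $y=2t/w$, which converts it to a Gamma-type integral matching the Gamma ratio in $\mathcal J$. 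All remaining contour integrals, on $\Re s=v-1/4$ say, together with the Stirling errors, are bounded by $O(K^{\epsilon})$ using the shift lower bounds $(\log K)^{-2}$. The meromorphic continuation from $\Re v>0$ is immediate, since every factor is an explicit ratio of $\zeta$ and $L_0$.
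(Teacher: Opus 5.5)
Your first step is correct: counting primitive ideals gives $\sum_C R(0;C)C^{-w}=L_0(w)/\zeta(2w)$. You are also right that $\widehat F_n(0)$ is a Mellin integral against $|n|^{2v-2s}$, so that the $h,\eta$-sums become zeta factors. But the step you call the ``main obstacle'' is the whole proposition, and the route you sketch through it does not work.

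\textbf{The missing computation.} No stationary phase is needed. With $x=n\cot\theta$, the $x$-integral is the exact beta integral $n^{2v-2s}\int_0^\pi(\sin\theta)^{2s-2v-1}e^{4it\theta}\,\dd\theta=2\pi(2n)^{2v-2s}e(t)\Gamma(2s-2v)/\bigl(\Gamma(\frac12+2t-v+s)\Gamma(\frac12-2t-v+s)\bigr)$. Then:
\begin{itemize}
\item Pair $n$ with $-n$ and apply the reflection formula to $1/\Gamma(\frac12-2t-v+s)$. This replaces the Gamma ratio of $V_t$, which is $\approx(2t)^{2s}$, by $\Gamma(\frac12+2t+v+s)\Gamma(\frac12+2t+v-s)/\Gamma(\frac12+2t)^2\approx(2t)^{2v}$. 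That is, you get the weight $\mathcal I(v,s)$ times $\cos(\pi(v-s))$, plus a $\cos(\pi(4t+v-s))$ term that is $O((Kn)^{-A})$ after integrating by parts in $t$.
\item Summing over $h,\eta$ gives $\zeta(1+2v+2s)\zeta(2s-2v)$.
\item The functional equation of $\zeta$ turns the integrand into $\pi^{-2v}H_u(s)\mathcal I(v,s)\zeta(1+2v+2s)\zeta(1+2v-2s)$.
\end{itemize}
This is where the factor $K(2K/\pi)^{2v}\widetilde\Phi(2v)$ at the poles $s=\pm u$ comes from. Your appeal to (5.7) for $\mathcal J$ would instead give $K(2K/\pi)^{\pm2u}\widetilde\Phi(\pm2u)$ there.

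\textbf{Why the leftover contour cannot be bounded.} Since $\mathcal I(v,s)=2^{2v}K^{1+2v}\widetilde\Phi(2v)+O(K^{2\Re v+\epsilon})$ independently of $s$, moving the contour gains nothing. On $\Re s=v-\frac14$, or on any vertical line, the leftover integral has the same order $K^{1+2\Re v}$ as the main term, so it cannot be $O(K^\epsilon)$. In fact it equals minus half the residue sum, so discarding it would double the answer.

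The idea you are missing is the symmetry $s\mapsto-s$. $H_u$ is odd, while $\mathcal I(v,s)$ and $\zeta(1+2v+2s)\zeta(1+2v-2s)$ are even. Hence the integral over $\Re s=2$ is minus the integral over $\Re s=-2$, and so equals exactly half the sum of the residues at $\pm u,\pm v$:
\begin{itemize}
\item the residues at $\pm u$ each equal $\mathcal I(v,u)\zeta(1+2v+2u)\zeta(1+2v-2u)$;
\item the residues at $s=v$ (pole of $\zeta(1+2v-2s)$) and at $s=-v$ (pole of $\zeta(1+2v+2s)$) each equal $-\frac12H_u(v)\mathcal I(v,v)\zeta(1+4v)$.
\end{itemize}
Half of this sum is exactly the bracket in (7.1).

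Your pole bookkeeping is off here. The pole at $s=v$ comes neither from $H_u$ nor from $\zeta(2s-2v)$; the latter has its pole at $s=v+\frac12$, where $\cos(\pi(v-s))$ vanishes. Before the functional equation the pole at $s=v$ is that of $\Gamma(2s-2v)$, and afterwards that of $\zeta(1+2v-2s)$. You also omit its partner at $s=-v$.

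\textbf{The Poisson-in-$\eta$ digression.} The zero frequency is $O(K^{-A})$, because the full angular integral $\int_{-\pi}^{\pi}e^{4it\theta}\,\dd\theta=\sin(4\pi t)/(2t)$ is averaged against a smooth weight in $t$. So the nonzero frequencies you call negligible must carry the first main term. Your guess that the excluded $\eta=0$ term (essentially $-\zeta(1+4v)\int_0^\infty x^{2v}W_1(x)\,\dd x$, times $L_0(1+2v)/\zeta(2+4v)$) reproduces the second term of (7.1) is correct. It does not rescue the argument, though, since the first term is never obtained.
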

	
	\begin{proof}
		We first compute the Dirichlet series of $R(0;C)$. For prime powers,
		\[
		R(0;p^a)=
		\begin{cases}
			2,&p\equiv1\bmod4,\quad a\geq1,\\
			0,&p\equiv3\bmod4,\quad a\geq1,\\
			1,&p=2,\quad a=1,\\
			0,&p=2,\quad a\geq2.
		\end{cases}
		\]
		Thus, when $\Re w>1$,
		\begin{align*}
			\sum_{C\geq1}\frac{R(0;C)}{C^w}
			&=\prod_p\frac{1-p^{-2w}}
			{(1-p^{-w})(1-\chi_{-4}(p)p^{-w})}
			\\
			&=\frac{\zeta(w)L(w,\chi_{-4})}{\zeta(2w)}
			=\frac{L_0(w)}{\zeta(2w)}.
			\tag*{(7.2)}
		\end{align*}
		Taking $w=1+2v$ and pairing the terms with $\eta>0$ and $\eta<0$, we obtain
		\[
		(OD)_0=
		\frac{L_0(1+2v)}{\zeta(2+4v)}
		\sum_{h\geq1}\sum_{\eta\geq1}h^{-1-4v}
		\left(\widehat F_{h\eta}(0)+\widehat F_{-h\eta}(0)\right).
		\tag*{(7.3)}
		\]
		
		We next compute $\widehat F_n(0)$. Shift the Mellin contour in (3.2) to $\Re s=2$ and then apply Fubini's theorem to obtain
		\begin{align*}
			\widehat F_n(0)
			={}&\frac{1}{2\pi i}\int_{(2)}H_u(s)\pi^{-2s}
			\int_K^{2K}\Phi\left(\frac{t}{K}\right)
			\frac{\Gamma(\frac12+2t+v+s)
				\Gamma(\frac12+2t-v+s)}
			{\Gamma(\frac12+2t)^2}
			\\
			&\qquad\times
			\int_{\mathbb R}(x^2+n^2)^{v-\frac12-s}
			e^{4it\arg(x+in)}\,\dd x\,\dd t\,\dd s.
			\tag*{(7.4)}
		\end{align*}
		On $\Re s=2$, all the integrals converge absolutely.
		
		When $n>0$, make the change of variables $x=n\cot\theta$. The standard beta integral gives
		\begin{align*}
			&\int_{\mathbb R}(x^2+n^2)^{v-\frac12-s}
			e^{4it\arg(x+in)}\,\dd x
			\\
			&\qquad=n^{2v-2s}\int_0^\pi
			(\sin\theta)^{2s-2v-1}e^{4it\theta}\,\dd\theta
			\\
			&\qquad=\frac{2\pi(2n)^{2v-2s}e(t)\Gamma(2s-2v)}
			{\Gamma(\frac12+2t-v+s)
				\Gamma(\frac12-2t-v+s)}.
			\tag*{(7.5)}
		\end{align*}
		When $n<0$, the phase $e(t)$ is replaced by $e(-t)$. This formula is first valid in the region of absolute convergence and is then used by analytic continuation.
		
		Add the terms corresponding to $n$ and $-n$, and use the gamma reflection formula
		\[
		\frac{1}{\Gamma(\frac12-2t-v+s)}
		=\frac{\Gamma(\frac12+2t+v-s)
			\cos(\pi(-2t-v+s))}{\pi}.
		\]
		Next use
		\[
		\cos(2\pi t)\cos(\pi(-2t-v+s))
		=\frac12\cos(\pi(v-s))
		+\frac12\cos(\pi(4t+v-s)).
		\]
		Define
		\[
		\mathcal I(v,s)=\int_K^{2K}\Phi\left(\frac{t}{K}\right)
		\frac{\Gamma(\frac12+2t+v+s)
			\Gamma(\frac12+2t+v-s)}
		{\Gamma(\frac12+2t)^2}\,\dd t.
		\tag*{(7.6)}
		\]
		Thus
		\begin{align*}
			\widehat F_n(0)+\widehat F_{-n}(0)
			={}&\frac{2}{2\pi i}\int_{(2)}H_u(s)\pi^{-2s}
			2^{2v-2s}\Gamma(2s-2v)\cos(\pi(v-s))
			\\
			&\qquad\times\mathcal I(v,s)n^{2v-2s}\,\dd s
			+\mathcal E_n.
			\tag*{(7.7)}
		\end{align*}
		where
		\begin{align*}
			\mathcal E_n
			={}&\frac{2}{2\pi i}\int_{(2)}H_u(s)\pi^{-2s}
			\Gamma(2s-2v)(2n)^{2v-2s}
			\\
			&\qquad\times\int_K^{2K}\Phi\left(\frac{t}{K}\right)
			\frac{\Gamma(\frac12+2t+v+s)
				\Gamma(\frac12+2t+v-s)}
			{\Gamma(\frac12+2t)^2}
			\\
			&\qquad\qquad\times
			\cos(\pi(4t+v-s))\,\dd t\,\dd s.
			\tag*{(7.8)}
		\end{align*}
		
		Integrate repeatedly by parts in the $t$-integral defining $\mathcal E_n$. Since $\Phi$ is smooth and supported on $[1,2]$, all boundary terms are $0$. Each differentiation gains a factor of $K^{-1+\epsilon}$. Shifting the $s$-contour to the right by any fixed distance then gives
		\[
		\mathcal E_n\ll_{\Phi,\epsilon,A}(Kn)^{-A},
		\qquad n\geq1.
		\tag*{(7.9)}
		\]
		After summing over $h,\eta$, the high-frequency cosine term in (7.9) contributes $O_{\Phi,\epsilon,A}(K^{-A})$. Here the exterior factor $L_0(1+2v)/\zeta(2+4v)$ contributes at most a fixed power of $\log K$, which can be absorbed by increasing $A$ in (7.9).
		
		Substitute (7.7) into (7.3). On $\Re s=2$, the series over $h$ and $\eta$ converge absolutely, and
		\begin{align*}
			\sum_{h\geq1}\sum_{\eta\geq1}
			h^{-1-4v}(h\eta)^{2v-2s}
			&=\left(\sum_{h\geq1}h^{-1-2v-2s}\right)
			\left(\sum_{\eta\geq1}\eta^{2v-2s}\right)
			\\
			&=\zeta(1+2v+2s)\zeta(2s-2v).
			\tag*{(7.10)}
		\end{align*}
		Therefore
		\begin{align*}
			(OD)_0
			={}&\frac{L_0(1+2v)}{\zeta(2+4v)}
			\frac{2}{2\pi i}\int_{(2)}H_u(s)\pi^{-2s}
			2^{2v-2s}\Gamma(2s-2v)\cos(\pi(v-s))
			\\
			&\qquad\times\mathcal I(v,s)
			\zeta(1+2v+2s)\zeta(2s-2v)\,\dd s
			+O_{\Phi,\epsilon,A}(K^{-A}).
			\tag*{(7.11)}
		\end{align*}
		
		The functional equation of the zeta function gives
		\[
		2\pi^{-2s}2^{2v-2s}\Gamma(2s-2v)
		\cos(\pi(v-s))\zeta(2s-2v)
		=\pi^{-2v}\zeta(1+2v-2s).
		\tag*{(7.12)}
		\]
		Hence
		\begin{align*}
			(OD)_0
			={}&\pi^{-2v}\frac{L_0(1+2v)}{\zeta(2+4v)}
			\frac{1}{2\pi i}\int_{(2)}H_u(s)\mathcal I(v,s)
			\\
			&\qquad\times
			\zeta(1+2v+2s)\zeta(1+2v-2s)\,\dd s
			+O_{\Phi,\epsilon,A}(K^{-A}).
			\tag*{(7.13)}
		\end{align*}
		
		The function $H_u(s)$ is odd, $\mathcal I(v,s)$ is even, and the product of the two zeta factors is also even. Thus the integrand in (7.13) is odd. Shift the contour from $\Re s=2$ to $\Re s=-2$. The integral over the left vertical line is the negative of the integral over the right vertical line, so the latter equals one half of the sum of the residues crossed. The poles crossed are at $s=\pm u,\pm v$. A direct calculation gives
		\begin{align*}
			&\frac{1}{2\pi i}\int_{(2)}H_u(s)\mathcal I(v,s)
			\zeta(1+2v+2s)\zeta(1+2v-2s)\,\dd s
			\\
			&\qquad=\mathcal I(v,u)
			\zeta(1+2v+2u)\zeta(1+2v-2u)
			\\
			&\qquad\quad-\frac12H_u(v)\mathcal I(v,v)\zeta(1+4v).
			\tag*{(7.14)}
		\end{align*}
		The uniform version of Stirling's formula gives
		\[
		\mathcal I(v,s)
		=2^{2v}K^{1+2v}\widetilde\Phi(2v)
		+O_{\Phi,\epsilon}\left(K^{2\Re v+\epsilon}\right),
		\qquad s\in\{u,v\}.
		\]
		The main integral above provides the meromorphic continuation in $v$. By (7.9), the $h,\eta$-series corresponding to the high-frequency term converges absolutely and locally uniformly in the target region, and is $O_{\Phi,\epsilon,A}(K^{-A})$. By the lower bounds on the shifts, the remaining factors in (7.14) cost at most a fixed power of $\log K$. Substituting (7.14) into (7.13) and then using the preceding formula shows that (7.1) holds uniformly throughout the target region.
	\end{proof}
	
	\section{Evaluation of $(OD)^*$}
	
	In this section we first establish a spectral identity in a right half-plane and then continue the spectral side to the target region. In the target region, the $q$-Dirichlet series generally does not converge, so below we first introduce truncations and then remove them by normal convergence.
	
	\subsection{Spectral expansion and a common transform estimate}
	
	For $h\geq 1$, $\eta\neq 0$, and $q\neq 0$, put
	\[
	g_{h,\eta,q}(C)
	=C^{-1-2v}\widehat F_{h\eta}\left(\frac{q}{hC}\right).
	\tag*{(8.1.1)}
	\]
	We first work in the half-plane $\Re v>1/2$. The continuation in $v$ will be made only after both sides of the spectral identity have been shown to converge normally.
	
	Fix $\omega\in C_c^\infty([0,\infty))$ with $\omega(x)=1$ for $0\leq x\leq1$. For $X\geq2$, put
	$g_{h,\eta,q}^{(X)}(C)=g_{h,\eta,q}(C)\omega(C/X)\omega(1/(XC))$.
	For positive integers $H,N,Q$, define
	\[
	\mathcal O_{H,N,Q,X}
	=\sum_{h\leq H}h^{-1-4v}
	\sum_{0<|\eta|\leq N}\sum_{0<|q|\leq Q}
	\sum_{C\geq1}R(q\eta;C)g_{h,\eta,q}^{(X)}(C).
	\]
	This is a finite linear combination of compactly supported Poincar\'e series. The spectral-expansion lemma in Section 2 therefore applies term by term and gives
	\begin{align*}
		\mathcal O_{H,N,Q,X}
		={}&\frac{1}{8\pi}
		\sum_{h\leq H}h^{-1-4v}
		\sum_{0<|\eta|\leq N}\sum_{0<|q|\leq Q}
		\int_{\mathbb R}
		\overline{\rho_t(q\eta)}E\left(i,\frac12+it\right)
		\mathcal K_{q\eta}g_{h,\eta,q}^{(X)}(t)\,\dd t
		\\
		&+\frac12
		\sum_{h\leq H}h^{-1-4v}
		\sum_{0<|\eta|\leq N}\sum_{0<|q|\leq Q}
		\sum_j\overline{\rho_j(q\eta)}u_j(i)
		\mathcal K_{q\eta}g_{h,\eta,q}^{(X)}(t_j).
	\end{align*}
	For fixed $h,\eta,q$ and $\Re v>0$, the derivatives of $g_{h,\eta,q}$ satisfy the hypotheses of the truncation lemma in Section 2. Indeed, the rapid decay of $\widehat F_{h\eta}$ gives arbitrary positive powers of $C$ as $C\to0$, whereas differentiation of $C^{-1-2v}\widehat F_{h\eta}(q/(hC))$ gives $O(C^{-1-2\Re v-r})$ after $r$ differentiations as $C\to\infty$. Hence, with $H,N,Q$ fixed, the limit $X\to\infty$ may be taken in the preceding finite identity. The passage $H,N,Q\to\infty$ will be justified after the common transform estimate below.
	
	For later reference, let $\Omega\Subset\{v:1/2<\Re v<1\}$ and choose
	$0<\epsilon_\Omega<2\inf_{v\in\Omega}\Re v-1$. We shall prove below that, for every sufficiently large $A$ and some $B=B(A,\Omega,c_0)>0$,
	\begin{align*}
		&\sum_{h\geq1}\sum_{\eta\neq0}h^{-1-4\Re v}
		\sum_{C\geq1}\sum_{q\neq0}
		\frac{|R(q\eta;C)|}{C^{1+2\Re v}}
		\left|\widehat F_{h\eta}\left(\frac{q}{hC}\right)\right|
		\\
		&\qquad\ll_{A,\Omega,\Phi,c_0}K^B
		\sum_{h\geq1}h^{-4\Re v}
		\sum_{\eta\neq0}(1+h|\eta|)^{-A}
		\sum_{C\geq1}C^{-2\Re v+\epsilon_\Omega}<\infty,
		\tag*{(8.1.2)}
	\end{align*}
	locally uniformly for $v\in\Omega$. We shall also prove that the limit of the spectral side is absolutely convergent and equals
	\begin{align*}
		(OD)^*&=(OD)_E+(OD)_M,
		\\
		(OD)_E
		&=\frac{1}{8\pi}
		\sum_{h\geq1}h^{-1-4v}
		\sum_{\eta\neq0}\sum_{q\neq0}
		\int_{\mathbb R}
		\overline{\rho_t(q\eta)}E\left(i,\frac12+it\right)
		\mathcal K_{q\eta}g_{h,\eta,q}(t)\,\dd t,
		\\
		(OD)_M
		&=\frac12
		\sum_{h\geq1}h^{-1-4v}
		\sum_{\eta\neq0}\sum_{q\neq0}
		\sum_j\overline{\rho_j(q\eta)}u_j(i)
		\mathcal K_{q\eta}g_{h,\eta,q}(t_j).
		\tag*{(8.1.3)}
	\end{align*}
	Until the convergence proof at the end of the subsection, the right-hand side of (8.1.3) is only shorthand for the corresponding iterated truncation: first $X\to\infty$ with $H,N,Q$ fixed, and then rectangular limits in $H,N,Q$. Its existence as an ordinary spectral sum is not being assumed here.
	
	To put the Bessel transforms into a form independent of $q$, use $\kappa$ for the real variable in the definition of $W_1$. For $n\neq0$, set
	\[
	\mathscr B_n^\pm(t)
	=\int_0^\infty\widehat F_n(\pm y)y^{-\frac12+2v}
	K_{it}(2\pi|n|y)\,\dd y,
	\qquad
	\mathscr B_n(t)=\mathscr B_n^+(t)+\mathscr B_n^-(t).
	\]
	The change of variables $y=|q|/(hC)$ gives the exact identity
	\[
	\mathcal K_{q\eta}g_{h,\eta,q}(t)
	=h^{\frac12+2v}|q|^{-\frac12-2v}
	\mathscr B_{h\eta}^{\operatorname{sgn}(q)}(t).
	\tag*{(8.1.4)}
	\]
	
	\begin{lemma}[Uniform transform estimate]
		Fix $A>0$, integers $j\geq0$ and $r\geq0$, and $\epsilon>0$. Suppose that
		$|\Re u|+|\Re v|\leq c_0/\log K$ and $|u|,|v|<\log K$, where $c_0$ is fixed. Then, uniformly for $n\neq0$, $x\in\mathbb R$, and $t\in\mathbb R$,
		\begin{align*}
			F_n^{(j)}(x)
			&\ll_{A,j,\Phi,\epsilon,c_0}
			K^{-j+\epsilon}
			\left(1+\frac{|x|}{K}\right)^{-A}(1+|n|)^{-A},
			\\
			e^{\frac\pi2|t|}
			\left(|\mathscr B_n^+(t)|+|\mathscr B_n^-(t)|\right)
			&\ll_{A,\Phi,\epsilon,c_0}
			K^{\frac12+\epsilon}(1+|t|)^{-A}(1+|n|)^{-A}.
			\tag*{(8.1.5)}
		\end{align*}
		The same estimates remain valid with $F_n^{(j)}$ and $\mathscr B_n^\pm$ replaced by
		$\partial_u^a\partial_v^bF_n^{(j)}$ and
		$\partial_u^a\partial_v^b\mathscr B_n^\pm$, respectively, whenever $a+b\leq r$, at the cost of replacing $\epsilon$ by $2\epsilon$.
		
		Let $0\leq\sigma_0<1/2$ be fixed. The functions $\mathscr B_n^\pm(t)$ are holomorphic for
		$|\Im t|<\Re(1/2+2v)$. Uniformly for $|\Im t|\leq\sigma_0$ and sufficiently large $K$,
		\[
		e^{\frac\pi2|\Re t|}
		\left(|\mathscr B_n^+(t)|+|\mathscr B_n^-(t)|\right)
		\ll_{A,\sigma_0,\Phi,\epsilon,c_0}
		K^{\frac12+\sigma_0+\epsilon}
		(1+|t|)^{-A}(1+|n|)^{-A}.
		\tag*{(8.1.6)}
		\]
		
		There is also a fixed-strip form. Let
		$\Omega\Subset\{v:-1/4<\Re v<1\}$, put
		$\sigma_\Omega=\inf_{v\in\Omega}\Re(1/2+2v)>0$, and fix
		$0\leq\sigma_0<\min(\sigma_\Omega,1/2)$. There is a number
		$B=B(A,j,r,\Omega,\sigma_0,c_0)>0$ such that, uniformly for
		$v\in\Omega$, $|\Re u|\leq c_0/\log K$, $|u|<\log K$,
		$a+b\leq r$, and $|\Im t|\leq\sigma_0$,
		\begin{align*}
			|\partial_u^a\partial_v^bF_n^{(j)}(x)|
			&\ll_{A,j,r,\Omega,\Phi,c_0}K^{B-j}
			\left(1+\frac{|x|}{K}\right)^{-A}(1+|n|)^{-A},
			\\
			e^{\frac\pi2|\Re t|}
			\left(|\partial_u^a\partial_v^b\mathscr B_n^+(t)|
			+|\partial_u^a\partial_v^b\mathscr B_n^-(t)|\right)
			&\ll_{A,r,\Omega,\sigma_0,\Phi,c_0}K^B
			(1+|t|)^{-A}(1+|n|)^{-A}.
			\tag*{(8.1.7)}
		\end{align*}
		In particular, (8.1.7) is a statement with fixed quantifiers; it is the crude-strip estimate, including the compact-complex-$t$ variant, used in the continuation arguments below.
	\end{lemma}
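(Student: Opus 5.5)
We prove the three estimates in order. The first, for $F_n^{(j)}$, is a refinement of (6.9). The second, for $\mathscr B_n^\pm$ on the real line, follows from the first by Fourier analysis and the Mellin representation of $K_{it}$. The complex-$t$ estimates (8.1.6) and (8.1.7) come from the same computation after moving contours.

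\emph{Estimates for $F_n$.} Write $F_n(x)=(x^2+n^2)^vW_1(x+in)$ with $z=x+in=\rho e^{i\theta}$, and insert (3.2). This gives
\[
F_n(x)=\frac{1}{2\pi i}\int_{(\sigma)}H_u(s)\pi^{-2s}\rho^{2v-1-2s}\int\Phi(\kappa/K)G(\kappa,s)e^{4i\kappa\theta}\,\dd\kappa\,\dd s,
\]
where $G$ is the Gamma quotient, of size $\asymp\kappa^{2s}$ by Stirling.
\begin{itemize}
\item For $\rho\gg K^{1+\epsilon}$, move to large $\sigma$. This gives decay $(\rho/K)^{-A}$, which covers both $(1+|x|/K)^{-A}$ and, when $|n|\gg K^{1+\epsilon}$, the factor $(1+|n|)^{-A}$.
\item For $\rho\le K^{1+\epsilon}$, take $\sigma=\Re v$ close to $0$ (more precisely, a line just to the right of $\pm u$ at distance $\asymp 1/\log K$, absorbing logarithmic losses). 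Integrate by parts $m$ times in $\kappa$. Since $|n|\ge1$, the angle satisfies $\theta\in(0,\pi)$, bounded away from $0$ and $\pi$ unless $|x|\gg|n|$. Each step gains $(K|\theta|)^{-1}$ with $|\theta|\asymp |n|/\rho$ in the worst case. This gives decay in $K|n|/\rho$.
\end{itemize}
Combining the two regimes with the trivial bound $K^{1+\epsilon}\rho^{-1}$ for the $\kappa$-integral produces $(1+|n|)^{-A}$. Indeed, $|n|\le\rho\le K^{1+\epsilon}$, so either $K|n|/\rho\ge K^{\epsilon'}|n|^{1/2}$ or $\rho$ is large compared with $K$. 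Each $x$-derivative costs $\partial_x\theta\ll 1/\rho$ times the factor $K$ from the phase, or $1/\rho$ from the amplitude. Where the decay has not already won, this is $\ll K^{-1}$ times a harmless factor, which yields $K^{-j}$. Derivatives in $u,v$ only introduce powers of $\log\rho\ll\log K$ and factors $\partial H_u$, which explains the $2\epsilon$.

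\emph{Estimates for $\mathscr B_n^\pm$.} Write $K_{it}(y)=\frac{1}{4\pi i}\int_{(c)}\Gamma(\frac{w+it}{2})\Gamma(\frac{w-it}{2})2^{w-2}y^{-w}\,\dd w$. This turns $\mathscr B_n^\pm(t)$ into a Mellin integral of $\widehat F_n(\pm y)y^{-1/2+2v}$ at $-w$, so we need $\int_0^\infty \widehat F_n(y)y^{-1/2+2v-w}\,\dd y$. By the $F_n$ bounds, $\widehat F_n$ is supported essentially on $|y|\ll K^{-1+\epsilon}$, has size $K^{1+\epsilon}(1+|n|)^{-A}$, and decays rapidly beyond that scale.

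For the vertical contour we take $c$ with $-\Re(1/2+2v)<-c$, that is, $c$ slightly smaller than $\Re(1/2+2v)$, keeping $|\Im t|<c$. The Mellin integral is then of size $K^{1+\epsilon}\cdot K^{(-1/2-2v+c)}\cdot K^{-1}$, so choosing $c\to0^+$, or $c=\sigma_0$ in the strip version, gives $K^{1/2+\epsilon}$, respectively $K^{1/2+\sigma_0+\epsilon}$. The $(2\pi|n|)^{-w}$ factor is harmless.

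The decay in $t$ with the weight $e^{\frac\pi2|t|}$ comes from Stirling for $\Gamma(\frac{w\pm it}{2})$. Their product is $\asymp e^{-\frac\pi2|t|}$ times polynomial factors when $|\Im w|\le|t|$, with exponential decay otherwise. The polynomial decay $(1+|t|)^{-A}$ requires decay of the Mellin transform of $\widehat F_n$ in $\Im w$. We obtain this by integrating by parts in $y$ using the smoothness of $\widehat F_n$. I expect this to be the main obstacle: $\widehat F_n$ is not smooth at $y=0$ in a clean way, because $y^{-1/2+2v}$ is singular there. The remedy I would try first is to express $\widehat F_n(y)$ via its own Mellin–Fourier representation, namely the Fourier transform of $\rho^{2v-1-2s}e^{4i\kappa\theta}$ in $x$, which is computable as in (7.5) and is a confluent hypergeometric or Whittaker-type function in $y$. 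This gives an exact double integral in which the $t$-decay appears from the Gamma ratios.

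Holomorphy in $|\Im t|<\Re(1/2+2v)$ follows from absolute convergence near $y=0$, since $K_{it}(y)\ll y^{-|\Im t|}$. The fixed-strip version (8.1.7) repeats the argument with cruder bookkeeping, allowing any power $K^B$ and using only that $\Re v$ stays in a compact subset.
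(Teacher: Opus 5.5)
\paragraph{Where the argument breaks.} Your treatment of $F_n$ is essentially the paper's argument: a Mellin representation of $V_\kappa$, an $s$-line chosen according to the size of $\rho=|x+in|$ relative to $K$, and integration by parts in $\kappa$ against $e^{4i\kappa\theta}$. The paper's bookkeeping is cleaner. It keeps the radial factor $(1+\rho/K)^{-M}$ and the angular factor $(1+K|\theta|)^{-N}$ at the same time and uses $(1+\rho/K)(1+K|\theta|)\ge 1+|x|/K+|n|$. Your either/or dichotomy, as written, misses the range $K^{1-\epsilon'}|n|^{1/2}<\rho\le K^{1+\epsilon}$, which then needs the trivial bound with thresholds tuned to $A$.

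The genuine gap is the factor $(1+|t|)^{-A}$ for $\mathscr B_n^\pm$. Put $M^\pm(w)=\int_0^\infty\widehat F_n(\pm y)y^{-\frac12+2v-w}\,\dd y$ and write $K_{it}$ as a Mellin--Barnes integral on $\Re w=c$, with $c$ just above $|\Im t|$. For $|\Im w|\ll1$ and $|t|$ large, Stirling gives
\[
\Bigl|\Gamma\Bigl(\frac{w+it}{2}\Bigr)\Gamma\Bigl(\frac{w-it}{2}\Bigr)\Bigr|\asymp|t|^{c-1}e^{-\frac\pi2|t|},
\]
while $M^\pm(w)$ has full size $K^{\frac12+\epsilon}$ there. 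Once you take absolute values, this range can only be bounded by $K^{\frac12+\epsilon}|t|^{c-1}e^{-\frac\pi2|t|}$. Decay of $M^\pm$ in $\Im w$ does nothing on this range, so your scheme yields at best $e^{\frac\pi2|t|}|\mathscr B_n^\pm(t)|\ll K^{\frac12+\epsilon}(1+|t|)^{c-1}$.

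The true decay comes from cancellation: the Gamma product has phase roughly $\Im w\cdot\log|t|$. For example, $\int_0^\infty y^{\beta}e^{-y}K_{it}(y)\,\dd y/y=\sqrt\pi\,\Gamma(\beta+it)\Gamma(\beta-it)/(2^\beta\Gamma(\beta+\frac12))$ has size $|t|^{2\Re\beta-1}e^{-\pi|t|}$, far below the absolute-value bound.

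The obstacle you flagged is not actually one. With $D=y\partial_y$ and $\beta=\frac12+2v$, one has $D^m\bigl(y^\beta\widehat F_n(\pm y)\bigr)=O(y^{\Re\beta})$ as $y\to0$. Plain integration by parts then gives $M^\pm(w)\ll_m|w|^{-m}$ on every line $\Re w<\Re\beta$, so no Whittaker representation is needed.

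\paragraph{What is missing.} You need a step that turns decay of $M^\pm$ in $\Im w$ into decay in $t$. Within your framework, shift the $w$-line far left to $\Re w=-2k-1$; this is allowed because $M^\pm$ is holomorphic on all of $\Re w<\Re\beta$.
\begin{itemize}
\item The residues at $w=\pm it-2j$ involve $M^\pm(\pm it-2j)$, whose imaginary part is $\pm\Re t$. They are therefore $O((1+|t|)^{-A})$, times $e^{-\frac\pi2|\Re t|}$ from the companion factor $\Gamma(\pm it-j)$.
\item The $j=0$ residues sit at $\Re w=\mp\Im t$ and produce the factor $K^{\frac12+\sigma_0}$ in (8.1.6).
\item On the new line the Gamma product near $\Im w=0$ is $\asymp|t|^{-2k-2}e^{-\frac\pi2|t|}$.
\item The accompanying factors $(|n|/K)^{2j}$ and $(|n|/K)^{2k+1}$ are absorbed by $(1+|n|)^{-A}$.
\end{itemize}

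The paper takes a different route. It uses the eigen-relation $\mathcal L_nK_{it}(a_nY)=-t^2K_{it}(a_nY)$ for $\mathcal L_n=D^2-(a_nY)^2$. Green's identity, whose boundary terms vanish because $\Re\beta>0$, then gains $(1+t^2)^{-R_0}$. Finally, $e^{-\frac\pi2|t|}$ and the power of $K$ come together from Cauchy--Schwarz against the closed form of $\int_0^\infty|K_{it}(x)|^2x^{2d}\,\dd x/x$. The paper takes $d=1/\log(2K)$ for (8.1.5), $d=\sigma_0+1/\log(2K)$ for (8.1.6), and $d=(\sigma_0+\sigma_\Omega)/2$ for (8.1.7).
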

	
	\begin{proof}
		Put $Z=2\kappa+1/2$ and
		$G_\kappa(v,s)=\Gamma(Z+v+s)\Gamma(Z-v+s)/\Gamma(Z)^2$. Also put
		$B_\kappa(R)=R^{2v}V_\kappa(R^2)$ for $R\geq1$. Then
		\[
		B_\kappa(R)
		=\frac{R^{-1+2v}}{2\pi i}
		\int_{(1)}H_u(s)G_\kappa(v,s)(\pi R)^{-2s}\,\dd s.
		\tag*{(8.1.8)}
		\]
		We begin with a uniform contour estimate. Fix $L>1$ and integers $a,m\geq0$. For
		$c\in\{-L,L\}$, repeated differentiation of the uniform Stirling expansion gives
		\[
		\int_{\mathbb R}|H_u(c+i\tau)|
		(1+|c+i\tau|+|v|)^m
		|\partial_\kappa^aG_\kappa(v,c+i\tau)|\,\dd\tau
		\ll_{a,m,L,\epsilon}K^{2c-a+\epsilon}.
		\tag*{(8.1.9)}
		\]
		This holds for every fixed $a,m,L$ under the stated shift bounds. To verify (8.1.9), split the $\tau$-line into the two sets
		$|\tau-\Im u|\leq(\log(2K))^2$ and
		$|\tau+\Im u|\leq(\log(2K))^2$, and their complement. On the first two sets, the Gamma arguments have real parts comparable with $K$ and imaginary parts $O((\log(2K))^2)=o(K)$. Stirling's formula is therefore uniform. Each $\kappa$-derivative contributes $K^{-1}$, together with a fixed polynomial in $1+|u|+|v|+|\tau|$, whose contribution is absorbed into $K^\epsilon$. On the complement, the real parts of the Gamma arguments are still positive and comparable with $K$. The inequality $|\Gamma(x+iy)|\leq\Gamma(x)$ for $x>0$, together with the integral representations for the differentiated Gamma ratios, leaves at most a fixed polynomial in $K+|\tau|$. The Gaussian factors $e^{-(\tau-\Im u)^2}$ and $e^{-(\tau+\Im u)^2}$ in the two summands of $H_u$ dominate this polynomial. This proves the integral estimate. The same argument at the residue points gives
		\[
		|\partial_\kappa^aG_\kappa(v,\xi)|
		\ll_{a,L,\epsilon}K^{2\Re\xi-a+\epsilon},
		\qquad \xi\in\{u,-u\}.
		\tag*{(8.1.10)}
		\]
		
		The lines $\Re s=\pm L$ stay a fixed distance from the poles $s=\pm u$, and the Gaussian bounds make the horizontal integrals tend to zero. No Gamma pole is crossed when the line in (8.1.8) is moved to $\Re s=\pm L$, since $Z\asymp K$ and $L$ is fixed. If $1\leq R\leq K$, move the line to $\Re s=-L$ and collect the two poles of $H_u$, each of residue $1$. If $R>K$, move it to $\Re s=L$; in this direction no pole is crossed. Thus
		\begin{align*}
			B_\kappa(R)
			={}&\sum_{\xi=u,-u}
			\pi^{-2\xi}G_\kappa(v,\xi)R^{-1+2v-2\xi}
			\\
			&+\frac{R^{-1+2v}}{2\pi i}
			\int_{(-L)}H_u(s)G_\kappa(v,s)(\pi R)^{-2s}\,\dd s,
			\qquad 1\leq R\leq K,
			\tag*{(8.1.11)}
		\end{align*}
		and
		\[
		B_\kappa(R)
		=\frac{R^{-1+2v}}{2\pi i}
		\int_{(L)}H_u(s)G_\kappa(v,s)(\pi R)^{-2s}\,\dd s,
		\qquad R>K.
		\tag*{(8.1.12)}
		\]
		Here the notation $\sum_{\xi=u,-u}$ counts the two entries with multiplicity. Thus the two residue terms in (8.1.11) are kept together when $u$ approaches zero; no division by $u$ is used anywhere.
		
		Let $R=(x^2+n^2)^{1/2}$. Differentiating (8.1.11) and (8.1.12), and then applying (8.1.9)--(8.1.10), gives, for fixed $a,b,M\geq0$,
		\[
		|\partial_\kappa^a\partial_x^bB_\kappa(R)|
		\ll_{a,b,M,\Phi,\epsilon}
		K^{-a+\epsilon}R^{-1-b}
		\left(1+\frac{R}{K}\right)^{-M}.
		\tag*{(8.1.13)}
		\]
		For example, on $\Re s=-L$ the factor left after (8.1.9) is
		$K^{-2L}R^{2L+2\Re v}$, which is bounded for $R\leq K$; on $\Re s=L$ it is
		$K^{2L}R^{-2L+2\Re v}$, which is $O((R/K)^{-M})$ once $L$ is chosen in terms of $M$. The residue terms satisfy the same estimate because
		$K^{2\Re\xi}R^{2\Re v-2\Re\xi}=O(1)$ for $R\leq K$ and $\xi=\pm u$. Since $|u|,|v|<\log K$, all fixed powers of the shifts are absorbed into $K^\epsilon$. Differentiation with respect to $u$ or $v$ introduces only fixed powers of logarithms and of the shifts, so the same argument proves the derivative assertion in the lemma.
		
		Write $\vartheta=\arg(x+in)$, with the branch fixed in Section 4. Since $n\neq0$, this is smooth as a function of $x$. For $a\geq1$,
		$|\vartheta^{(a)}(x)|\ll_a R^{-a}\min(1,|\vartheta|)$, and
		$|n|\leq R\min(1,|\vartheta|)$. With
		$A_\kappa(x)=\Phi(\kappa/K)B_\kappa(R)$, (8.1.13) yields
		\[
		|\partial_\kappa^p\partial_x^qA_\kappa(x)|
		\ll_{p,q,M,\Phi,\epsilon}
		K^{-p+\epsilon}R^{-1-q}
		\left(1+\frac{R}{K}\right)^{-M}.
		\tag*{(8.1.14)}
		\]
		
		We now estimate
		$F_n(x)=\int_K^{2K}e^{4i\kappa\vartheta}A_\kappa(x)\,\dd\kappa$.
		If $K|\vartheta|\leq1$, then $R\geq K|n|$. Direct differentiation under the integral, followed by (8.1.14), gives
		\[
		|F_n^{(j)}(x)|
		\ll_{A,j,\Phi,\epsilon}K^{-j+\epsilon}
		\left(1+\frac{|x|}{K}\right)^{-A}(1+|n|)^{-A}.
		\tag*{(8.1.15)}
		\]
		If $K|\vartheta|>1$, use
		$e^{4i\kappa\vartheta}=(4i\vartheta)^{-1}\partial_\kappa e^{4i\kappa\vartheta}$.
		After differentiating $j$ times with respect to $x$, integrate by parts in $\kappa$ exactly $N+2j+2$ times. Every boundary term vanishes because $\Phi$ and all its derivatives vanish at the endpoints of its support. An induction using (8.1.14) gives
		\[
		|F_n^{(j)}(x)|
		\ll_{j,N,M,\Phi,\epsilon}
		K^{-j+\epsilon}
		\left(1+\frac{R}{K}\right)^{-M}
		(1+K|\vartheta|)^{-N}.
		\tag*{(8.1.16)}
		\]
		Since
		$(1+R/K)(1+K|\vartheta|)\geq1+|x|/K+|n|$, choosing $M$ and $N$ in terms of $A$ proves the first estimate in (8.1.5).
		
		For the Bessel transform, let $f_n(X)=F_n(KX)$. The estimate just proved implies, for fixed $b,L,M\geq0$,
		\[
		\int_{\mathbb R}(1+|X|)^L|f_n^{(b)}(X)|\,\dd X
		\ll_{b,L,M,\Phi,\epsilon}K^\epsilon(1+|n|)^{-M}.
		\tag*{(8.1.17)}
		\]
		Differentiation under the Fourier integral and integration by parts therefore give
		\[
		(1+|Y|)^b|\partial_Y^\ell\widehat f_n(Y)|
		\ll_{b,\ell,M,\Phi,\epsilon}K^\epsilon(1+|n|)^{-M}.
		\tag*{(8.1.18)}
		\]
		Put $a_n=2\pi|n|/K$, $\beta=1/2+2v$, and
		$A_n^\pm(Y)=Y^\beta\widehat f_n(\pm Y)$. Since
		$\widehat F_n(y)=K\widehat f_n(Ky)$, a change of variables gives
		\[
		\mathscr B_n^\pm(t)
		=K^{\frac12-2v}\int_0^\infty
		A_n^\pm(Y)K_{it}(a_nY)\,\frac{\dd Y}{Y}.
		\tag*{(8.1.19)}
		\]
		Let $D=Y\partial_Y$ and $\mathcal L_n=D^2-(a_nY)^2$. From (8.1.18), for fixed $m,B,M$,
		\[
		|D^mA_n^\pm(Y)|
		\ll_{m,B,M,\Phi,\epsilon}K^\epsilon(1+|n|)^{-M}
		\begin{cases}
			Y^{\Re\beta},&0<Y\leq1,\\
			Y^{-B},&Y\geq1.
		\end{cases}
		\tag*{(8.1.20)}
		\]
		The Bessel equation is $\mathcal L_nK_{it}(a_nY)=-t^2K_{it}(a_nY)$. Green's identity for $\mathcal L_n$ with measure $\dd Y/Y$ has boundary term
		$[(DP)Q-P(DQ)]_0^\infty$. It vanishes for
		$P=(1-\mathcal L_n)^mA_n^\pm$ and $Q=K_{it}(a_nY)$: at zero this follows from $\Re\beta>0$ and the small-argument expansion of $K_{it}$, and at infinity from rapid decay. Thus, for every integer $R_0\geq0$,
		\[
		\int_0^\infty A_n^\pm(Y)K_{it}(a_nY)\,\frac{\dd Y}{Y}
		=(1+t^2)^{-R_0}
		\int_0^\infty(1-\mathcal L_n)^{R_0}A_n^\pm(Y)
		K_{it}(a_nY)\,\frac{\dd Y}{Y}.
		\tag*{(8.1.21)}
		\]
		
		Set $d_K=1/\log(2K)$. Expanding $(1-\mathcal L_n)^{R_0}$ and using (8.1.20), with a larger initial power of $(1+|n|)^{-1}$, gives
		\[
		\int_0^\infty
		|(1-\mathcal L_n)^{R_0}A_n^\pm(Y)|^2Y^{-2d_K}\,\frac{\dd Y}{Y}
		\ll_{R_0,M,\Phi,\epsilon}K^\epsilon(1+|n|)^{-2M}.
		\tag*{(8.1.22)}
		\]
		For $d>0$, the classical product integral is
		\[
		\int_0^\infty|K_{it}(x)|^2x^{2d}\,\frac{\dd x}{x}
		=\frac{2^{2d-3}\Gamma(d)^2\Gamma(d+it)\Gamma(d-it)}{\Gamma(2d)}.
		\tag*{(8.1.23)}
		\]
		With $d=d_K$, Stirling's formula, including the bounded $t$-range, yields
		\[
		\left(\int_0^\infty|K_{it}(a_nY)|^2Y^{2d_K}\,\frac{\dd Y}{Y}\right)^{1/2}
		\ll(\log(2K))^2a_n^{-d_K}
		e^{-\frac\pi2|t|}(1+|t|)^{d_K-\frac12}.
		\tag*{(8.1.24)}
		\]
		Since $n\neq0$, one has $a_n^{-d_K}\ll1$. Apply Cauchy--Schwarz to (8.1.21), then use (8.1.22), (8.1.24), and the preceding estimate. Because $|K^{1/2-2v}|\ll K^{1/2}$ in the central strip, we obtain
		\[
		e^{\frac\pi2|t|}|\mathscr B_n^\pm(t)|
		\ll_{R_0,M,\Phi,\epsilon}
		K^{\frac12+\epsilon}(1+t^2)^{-R_0}
		(1+|t|)^{d_K-\frac12}(1+|n|)^{-M}.
		\]
		Taking $R_0$ and $M$ sufficiently large proves the second estimate in (8.1.5).
		
		Now let $t=T+i\sigma$ with $|\sigma|\leq\sigma_0<1/2$. The small-argument expansion of $K_{it}$ and (8.1.20) show that (8.1.19) is locally uniformly convergent, hence holomorphic, whenever $|\Im t|<\Re\beta$. Take
		$d=\sigma_0+d_K$; for sufficiently large $K$, one has $d<\Re\beta$. The corresponding product formula is
		\[
		\int_0^\infty|K_{it}(x)|^2x^{2d}\,\frac{\dd x}{x}
		=\frac{2^{2d-3}}{\Gamma(2d)}
		\Gamma(d-\sigma)\Gamma(d+\sigma)
		\Gamma(d+iT)\Gamma(d-iT).
		\tag*{(8.1.25)}
		\]
		Uniform Stirling estimates give the same exponential decay as before, while
		$a_n^{-d}\ll_{\sigma_0}K^{\sigma_0+\epsilon}$. Repeating the argument based on (8.1.21) proves (8.1.6).
		
		Finally, let $\Omega\Subset\{-1/4<\Re v<1\}$ and retain the notation $\sigma_\Omega$ from the statement. On $\Omega$, the contour argument remains valid because $\Re\beta\geq\sigma_\Omega>0$. The residue terms in (8.1.11) and the prefactor in (8.1.19) may now contribute a fixed power of $K$, depending on $\Omega$. For $|\Im t|\leq\sigma_0<\min(\sigma_\Omega,1/2)$, take
		$d=(\sigma_0+\sigma_\Omega)/2$ in the product integral; then $|\Im t|<d<\Re\beta$ uniformly on $\Omega$, so both the Green identity and the weighted $L^2$ argument are uniform. Keeping all fixed powers of $K$ instead of absorbing them into $K^\epsilon$ proves (8.1.7). The same proof applies after a fixed number of $u$- and $v$-derivatives, because these derivatives introduce only powers of logarithms and polygamma functions. This completes the proof.
	\end{proof}
	
	We now remove the truncations. Let
	$\Omega\Subset\{v:1/2<\Re v<1\}$. The Fourier-transform formula, the fixed-strip estimate for $F_n^{(j)}$, and sufficiently many integrations by parts give, after enlarging $B$ if necessary,
	\[
	|\widehat F_n(y)|
	\ll_{A,\Omega,\Phi,c_0}K^{B+1}(1+K|y|)^{-A}(1+|n|)^{-A}.
	\]
	For $a>0$ and $A>1$, comparison with an integral gives
	$\sum_{q\neq0}(1+a|q|)^{-A}\ll_A\min(a^{-1},a^{-A})$.
	Taking $a=K/(hC)$ therefore yields
	\[
	\sum_{q\neq0}
	\left|\widehat F_{h\eta}\left(\frac{q}{hC}\right)\right|
	\ll_{A,\Omega,\Phi,c_0}K^B hC(1+h|\eta|)^{-A}.
	\]
	Since $|R(m;C)|\leq R(0;C)\ll_{\epsilon_\Omega}C^{\epsilon_\Omega}$, substitution of this estimate proves (8.1.2). Notice that the choice of $\epsilon_\Omega$ made above is exactly what is needed for uniform convergence of the $C$-series. Thus the geometric side is absolutely and locally uniformly convergent on $\Omega$.
	
	It remains to prove the corresponding assertion on the spectral side. Only even cusp forms contribute at $i$, since $u_j(i)=0$ for an odd form; for an even form write $\lambda_j(m)=\rho_j(m)/\rho_j(1)$ for $m\geq1$. For real $t$, the standard Fourier-coefficient formulas and Stirling's formula give a fixed $B_0>0$ such that
	$|E(i,1/2+it)|\ll(1+|t|)^{B_0}$ and
	$e^{-\pi|t|/2}|\xi(1+2it)|^{-1}\ll(1+|t|)^{B_0}$; at $t=0$ the latter expression is interpreted by continuity. The bounds
	$|\sigma_{2it}(m)|\leq d(m)$ and
	$|\lambda_j(m)|\leq d(m)m^{1/2}$ imply, uniformly for $v\in\Omega$,
	\begin{align*}
		\sum_{q\geq1}\frac{|\sigma_{2it}(q|\eta|)|}{q^{\frac12+2\Re v}}
		&\ll_{\Omega,\epsilon_\Omega}|\eta|^{\epsilon_\Omega},
		\\
		\sum_{q\geq1}\frac{|\lambda_j(q|\eta|)|}{q^{\frac12+2\Re v}}
		&\ll_{\Omega,\epsilon_\Omega}|\eta|^{\frac12+\epsilon_\Omega}.
	\end{align*}
	After (8.1.4) is inserted, the remaining arithmetic majorants are bounded by
	\[
	\sum_{h\geq1}h^{-\frac12-2\Re v}
	\sum_{\eta\neq0}(1+h|\eta|)^{-A}|\eta|^\theta,
	\qquad
	\theta\in\{\epsilon_\Omega,1/2+\epsilon_\Omega\},
	\]
	and these converge uniformly on $\Omega$ once $A$ is large enough. In the Eisenstein term, (8.1.7) cancels the exponential growth of $|\xi(1+2it)|^{-1}$ and leaves an arbitrarily large negative power of $1+|t|$; hence the $t$-integral is absolutely convergent.
	
	For the Maa\ss{} spectrum, use
	$e^{-\pi t_j/2}|\rho_j(1)|\ll(1+t_j)^{B_0}$ and the local Weyl estimate
	$\sum_{T\leq t_j<2T}|u_j(i)|\ll(1+T)^2$. The latter follows from the pointwise local Weyl law and Cauchy--Schwarz. On the dyadic block $T\leq t_j<2T$, (8.1.7) gives a bound $O((1+T)^{-A+B_0+2})$ after the arithmetic sums have been taken. The sum over dyadic $T$ therefore converges when $A>B_0+3$. Any exceptional spectral parameters form a finite set in $|\Im t_j|<1/2$ and are covered by the compact-complex-$t$ part of (8.1.7). Every summand and integrand is holomorphic in $v$, and all the majorants just obtained are uniform on compact subsets of $\Omega$. The Eisenstein and Maa\ss{} expressions consequently converge normally there.
	
	We may now take $X\to\infty$ first with $H,N,Q$ fixed, as prescribed by the truncation lemma, and then let $H,N,Q\to\infty$. Absolute convergence makes the order of the latter three limits immaterial and identifies the result with the ordinary sums and integral in (8.1.3). Thus (8.1.3) is proved for $1/2<\Re v<1$, where it agrees with the absolutely convergent geometric expression (6.14).
	
	The fixed-strip estimate (8.1.7) supplies the uniform transform majorant needed after the $q$-series is rewritten in the next two subsections. The normal convergence of those rewritten expressions is checked there on their respective domains; this is what justifies the later analytic continuations of the two spectral components.
	
	\subsection{Eisenstein spectrum}
	
	By the formula for the Eisenstein Fourier coefficients, for $\Re v>1/4$ we have, with absolute convergence,
	\begin{align*}
		\mathscr D_\eta(t;v)
		&=\sum_{q\geq1}\frac{\sigma_{2it}(q|\eta|)}{q^{\frac12+2v+it}}
		=\zeta\left(\frac12+2v+it\right)
		\zeta\left(\frac12+2v-it\right)\mathscr P_\eta(t;v),
		\\
		\mathscr P_\eta(t;v)
		&=\prod_{p^a\parallel|\eta|}
		\left(
		\sigma_{2it}(p^a)-p^{-\frac12-2v+it}\sigma_{2it}(p^{a-1})
		\right).
		\tag*{(8.2.1)}
	\end{align*}
	The Euler identity in (8.2.1) follows first in an absolute half-plane by checking one prime at a time, and then throughout $\Re v>1/4$ by absolute convergence of the $q$-series. Substituting (8.1.4) into (8.1.3) and combining the terms with $q>0$ and $q<0$, we obtain
	\begin{align*}
		(OD)_E
		={}&\frac{1}{4\pi}
		\sum_{h\geq1}h^{-\frac12-2v}
		\sum_{\eta\neq0}
		\int_{\mathbb R}
		\frac{E(i,\frac12+it)}{\xi(1-2it)}
		|\eta|^{-it}
		\mathscr D_\eta(t;v)
		\mathscr B_{h\eta}(t)\,\dd t.
		\tag*{(8.2.2)}
	\end{align*}
	In every closed strip contained in $1/4<\Re v<1$, the crude-strip form of (8.1.7), the elementary estimate
	\[
	\mathscr P_\eta(t;v)\ll_{\epsilon}|\eta|^\epsilon,
	\qquad t\in\mathbb R,
	\]
	and polynomial vertical-strip bounds for the zeta and Eisenstein factors show that the sum of the integrals of the absolute values in (8.2.2) is finite. Thus all the interchanges used in deriving (8.2.2) are justified there.
	
	We next continue (8.2.2) across $\Re v=1/4$. Put
	\[
	\mathscr A(t)=\frac{E(i,\frac12+it)}{\xi(1-2it)}.
	\]
	The function $\mathscr A$ is meromorphic in $t$. It has no pole on the real axis. Indeed, for $t\neq0$ this follows from the zero-free line $\zeta(1+it)\neq0$, and at $t=0$ it follows from
	\[
	E\left(i,\frac12+it\right)
	=\frac{2\zeta(\frac12+it)L(\frac12+it,\chi_{-4})}{\zeta(1+2it)};
	\]
	the apparent singularity of $\mathscr A$ at $0$ is removable. The
	scattering poles of $\mathscr A$ form a discrete set off the real axis.
	Choose $0<\rho<1/16$ so small that $|t|\leq4\rho$ contains no such pole.
	
	For later use, we record the normal-convergence assertion needed on the local contour. If $U$ is a sufficiently small neighborhood of $v=1/4$ and $|t|\leq4\rho$, then, for every $A>0$,
	\begin{align*}
		\mathscr B_{h\eta}(t)&\ll_{A,U,\Phi}K^B(1+h|\eta|)^{-A},
		\\
		\mathscr P_\eta(t;v)&\ll_{U,\rho,\epsilon}|\eta|^{16\rho+\epsilon}.
	\end{align*}
	Here the first estimate is the compact-complex-$t$ version of the crude strip estimate (8.1.7). It follows directly from (8.1.19), the small-argument expansion of $K_{it}$, and repeated integration by parts in the Fourier transform. The second estimate follows prime by prime from (8.2.1). Consequently, after increasing $A$, the $h,\eta$-series of the integrand converges normally on every local contour used below. More explicitly, on such a contour it is dominated, apart from a constant depending on the contour, by
	\[
	K^B\sum_{h\geq1}h^{-\frac12-2\Re v}
	\sum_{\eta\neq0}(1+h|\eta|)^{-A}|\eta|^{16\rho+\epsilon},
	\]
	which is finite when $A$ is sufficiently large, uniformly for $v$ in a
	compact subset of $U$ and for contours staying a fixed distance from the
	moving poles. Along the displaced path $\Im v=\tau$ used below, the two
	moving poles remain distinct. In disjoint tubular neighborhoods of their
	trajectories, the same majorant holds after multiplication by
	$t-t_+(v)$ or $t-t_-(v)$, respectively. No assertion at the collision
	point $(v,t)=(1/4,0)$ is needed, since this path avoids that point. Hence
	the two simple residues may be taken term by term and then summed.
	
	The only poles depending on $v$ that cross the real $t$-axis are
	\[
	t_+(v)=i\left(2v-\frac12\right),
	\qquad
	t_-(v)=-i\left(2v-\frac12\right).
	\tag*{(8.2.3)}
	\]
	To fix the signs unambiguously, start with real $v>1/4$ close to $1/4$, move first to $v+i\tau$ with $0<\tau<\rho/8$, decrease $\Re v$ through $1/4$, and then return to the real $v$-axis. At the crossing, $t_+$ and $t_-$ meet the real axis at $-2\tau$ and $2\tau$, respectively. Choose disjoint discs $D_+$ and $D_-$ centered at these two points, with closures contained in $|t|<\rho$. For a truncated integral over $[-T,T]$, split the contour into its portions inside $D_+$, inside $D_-$, and the complementary portion. Only the first portion is deformed when $t_+$ crosses, and only the second is deformed when $t_-$ crosses; the complementary portion is kept fixed. Equivalently, in each disc replace the real diameter by a small detour with the same endpoints. This local splitting avoids any ambiguity caused by the interchange of the vertical positions of $t_+$ and $t_-$. It also keeps the contour equal to the real axis outside $D_+\cup D_-$, so no scattering pole is crossed. Letting $T\to\infty$ is legitimate and uniform by the preceding majorant.
	
	After the deformed contour is pulled back to the real axis, the downward crossing of $t_+$ contributes $2\pi i$ times its residue, whereas the upward crossing of $t_-$ contributes $-2\pi i$ times its residue. Hence, initially for real $0<v<1/4$,
	\[
	\mathscr G_{h,\eta}(t;v)
	=\mathscr A(t)|\eta|^{-it}
	\zeta\left(\frac12+2v+it\right)
	\zeta\left(\frac12+2v-it\right)
	\mathscr P_\eta(t;v)\mathscr B_{h\eta}(t),
	\]
	and
	\begin{align*}
		(OD)_E
		={}&\mathcal E_{\mathrm{reg}}(v)
		+\frac{1}{4\pi}\sum_{h\geq1}h^{-\frac12-2v}
		\sum_{\eta\neq0}
		\bigg\{
		2\pi i\operatorname*{Res}_{t=t_+(v)}\mathscr G_{h,\eta}(t;v)
		-2\pi i\operatorname*{Res}_{t=t_-(v)}\mathscr G_{h,\eta}(t;v)
		\bigg\},
	\end{align*}
	where
	\begin{align*}
		\mathcal E_{\mathrm{reg}}(v)
		={}&\frac{1}{4\pi}
		\sum_{h\geq1}h^{-\frac12-2v}
		\sum_{\eta\neq0}
		\int_{\mathbb R}
		\mathscr A(t)|\eta|^{-it}
		\\
		&\qquad\times
		\zeta\left(\frac12+2v+it\right)
		\zeta\left(\frac12+2v-it\right)
		\mathscr P_\eta(t;v)\mathscr B_{h\eta}(t)\,\dd t.
	\end{align*}
	The residues of the first and second zeta factors at $t_+$ and $t_-$ are $-i$ and $i$, respectively. It follows, with every sum normally convergent, that the two crossing contributions are
	\begin{align*}
		\mathcal R_+(v)
		={}&\frac12\frac{E(i,1-2v)}{\xi(4v)}\zeta(4v)
		\sum_{h\geq1}h^{-\frac12-2v}
		\sum_{\eta\neq0}|\eta|^{2v-\frac12}
		\mathscr P_\eta(t_+;v)\mathscr B_{h\eta}(t_+),
		\\
		\mathcal R_-(v)
		={}&\frac12\frac{E(i,2v)}{\xi(2-4v)}\zeta(4v)
		\sum_{h\geq1}h^{-\frac12-2v}
		\sum_{\eta\neq0}|\eta|^{\frac12-2v}
		\mathscr P_\eta(t_-;v)\mathscr B_{h\eta}(t_-).
	\end{align*}
	The signs in both formulas are positive: for $t_+$ the factor $2\pi i$ is multiplied by the residue $-i$, while for $t_-$ the factor $-2\pi i$ is multiplied by the residue $i$.
	
	The identities
	\begin{align*}
		\mathscr B_n(t_-)&=\mathscr B_n(t_+),
		\\
		\mathscr P_\eta(t_-;v)&=|\eta|^{4v-1}\mathscr P_\eta(t_+;v),
		\\
		\frac{E(i,2v)}{\xi(2-4v)}&=\frac{E(i,1-2v)}{\xi(4v)}
	\end{align*}
	show that $\mathcal R_-(v)=\mathcal R_+(v)$. The first identity uses $K_{it}=K_{-it}$, the second follows from the finite Euler product, and the third follows from the functional equations of the Eisenstein series and $\xi$. We have therefore proved
	\[
	(OD)_E=\mathcal E_{\mathrm{reg}}(v)+\mathcal R(v),
	\qquad
	\mathcal R(v)=2\mathcal R_+(v),
	\]
	initially for $0<v<1/4$. Both sides are meromorphic in $v$, so the identity theorem extends this equality to complex $v$ wherever the expressions below are defined.
	
	We now evaluate $\mathcal R(v)$. We use the cosine--Bessel kernel identity
	\[
	\int_0^\infty \cos(by)y^\nu K_\nu(ay)\,\dd y
	=\frac{\sqrt\pi(2a)^\nu\Gamma(\nu+\frac12)}
	{2(a^2+b^2)^{\nu+\frac12}},
	\qquad \Re\nu>-\frac12,\quad a>0.
	\]
	For $\Re v>1/4$, take $\nu=2v-\frac12$, $a=2\pi|n|$, and $b=2\pi x$. Since $K_{-\nu}=K_\nu$, Fourier inversion gives
	\[
	\widehat F_n(y)+\widehat F_n(-y)
	=2\int_{\mathbb R}F_n(x)\cos(2\pi xy)\,\dd x.
	\]
	Substituting this identity into the definition of $\mathscr B_n(t_+(v))$, applying Fubini's theorem, and using $F_n(x)=(x^2+n^2)^vW_1(x+in)$ gives
	\[
	\mathscr B_n(t_+(v))
	=\frac{\Gamma(2v)}{2\pi^{2v}}|n|^{2v-\frac12}
	\int_{\mathbb R}(x^2+n^2)^{-v}W_1(x+in)\,\dd x.
	\tag*{(8.2.4)}
	\]
	The integral defining the left-hand side at $t=t_+(v)$ and the integral
	on the right are both holomorphic in the connected strip
	$0<\Re v<1$, locally uniformly in the stated shift range; this follows
	from the integral representations and the crude-strip estimate (8.1.7).
	Thus (8.2.4) holds throughout $0<\Re v<1$ by the identity
	theorem.
	
	Rearranging the normally convergent $h,\eta$-series by $n=h\eta$, using the prime-power identity
	\[
	\sum_{h\mid n}h^{-4v}
	\mathscr P_{n/h}(t_+(v);v)=\sigma_{1-4v}(n),
	\qquad n\geq1,
	\]
	and then applying $n^{4v-1}\sigma_{1-4v}(n)=\sigma_{4v-1}(n)$, we obtain
	\[
	\mathcal R(v)
	=\frac{L_0(1-2v)}{\zeta(2-4v)}
	\sum_{n\neq0}\sigma_{4v-1}(|n|)
	\int_{\mathbb R}(x^2+n^2)^{-v}W_1(x+in)\,\dd x.
	\]
	All rearrangements here are justified first for $0<\Re v<1/4$ by the rapid decay in $n$ inherited from (8.1.5). Equivalently, one may truncate $h,\eta$ and then pass to the limit by normal convergence. This also justifies interchanging the residue operation with the $h,\eta$-sums.
	
	To continue the last expression across $\Re v=0$, truncate the $n$-sum at $|n|\leq N$, insert the Mellin representation of $W_1$, and pair $n$ with $-n$. For fixed $N$ all interchanges are finite. The beta integral, the gamma reflection formula, and the functional equation of the zeta function show that the nonoscillatory part is
	\begin{align*}
		\frac{L_0(1-2v)}{\zeta(2-4v)}
		\frac{\pi^{2v}}{2\pi i}
		\int_{(2)}H_u(s)\mathcal I(-v,s)
		\zeta(1-2v+2s)\zeta(1-2v-2s)\,\dd s.
	\end{align*}
	The estimates used in (7.9), together with (8.1.13), dominate the truncated expressions by an $n$-summable majorant, locally uniformly for $|\Re v|<1/4$. Hence the limit $N\to\infty$ may be taken under the $s$-integral. The high-frequency cosine term is holomorphic in the same strip and is $O_{\Phi,\epsilon,A}(K^{-A})$, uniformly in the target range.
	
	The function $H_u(s)$ is odd, while $\mathcal I(-v,s)$ and the product of the two zeta factors are even in $s$. Shift the $s$-contour from $\Re s=2$ to $\Re s=-2$ through finite rectangles. The Gaussian decay of $H_u$ makes the horizontal integrals tend to $0$. By the separation assumptions on $u$ and $v$, the only poles crossed are the four distinct simple poles $s=\pm u,\pm v$. Since the integral on $\Re s=-2$ is the negative of the integral on $\Re s=2$, the latter is one half of the sum of these four residues. Consequently,
	\begin{align*}
		&\frac{\pi^{2v}}{2\pi i}
		\int_{(2)}H_u(s)\mathcal I(-v,s)
		\zeta(1-2v+2s)\zeta(1-2v-2s)\,\dd s
		\\
		&\qquad=\pi^{2v}\bigg\{
		\mathcal I(-v,u)\zeta(1+2u-2v)\zeta(1-2u-2v)
		\\
		&\qquad\qquad-\frac12H_u(-v)\mathcal I(-v,v)\zeta(1-4v)
		\bigg\}.
	\end{align*}
	The uniform Stirling formula gives
	\[
	\mathcal I(-v,s)
	=2^{-2v}K^{1-2v}\widetilde\Phi(-2v)
	+O_{\Phi,\epsilon}\left(K^{-2\Re v+\epsilon}\right),
	\qquad s\in\{u,v\}.
	\]
	It follows that
	\begin{align*}
		\mathcal R(v)
		={}&K\left(\frac{2K}{\pi}\right)^{-2v}\widetilde\Phi(-2v)
		\frac{L_0(1-2v)\zeta(1+2u-2v)\zeta(1-2u-2v)}
		{\zeta(2-4v)}
		\\
		&-\frac K2\left(\frac{2K}{\pi}\right)^{-2v}\widetilde\Phi(-2v)H_u(-v)
		\frac{L_0(1-2v)\zeta(1-4v)}{\zeta(2-4v)}
		+O_{\Phi,\epsilon}(K^\epsilon).
	\end{align*}
	
	It remains to estimate $\mathcal E_{\mathrm{reg}}(v)$. For real $t$, uniformly in the target range,
	\[
	e^{-\frac\pi2|t|}
	\left|
	\mathscr A(t)|\eta|^{-it}
	\zeta\left(\frac12+2v+it\right)
	\zeta\left(\frac12+2v-it\right)
	\mathscr P_\eta(t;v)
	\right|
	\ll_\epsilon K^\epsilon|\eta|^\epsilon(1+|t|)^{1+\epsilon}.
	\tag*{(8.2.5)}
	\]
	At $t=0$ the left-hand side is interpreted by its removable value. This estimate follows from the displayed formula for $E(i,\frac12+it)$, Stirling's formula, the convexity bounds for the two degree-one $L$-functions, the convexity bounds for the two shifted zeta factors, and the standard bound for $1/\zeta(1+it)$ on the one-line \cite[Chapters 5 and 10]{IwaniecKowalski}. 
	
	Combining (8.2.5) with (8.1.5), and choosing the decay exponent in (8.1.5) sufficiently large, gives
	\begin{align*}
		&\sum_{h\geq1}|h^{-\frac12-2v}|
		\sum_{\eta\neq0}
		\int_{\mathbb R}
		\left|
		\mathscr A(t)|\eta|^{-it}
		\mathscr D_\eta(t;v)\mathscr B_{h\eta}(t)
		\right|\,\dd t
		\\
		&\qquad\ll_{\Phi,\epsilon}K^{\frac12+\epsilon}.
	\end{align*}
	This proves both the asserted bound and the absolute, locally uniform convergence of the $h,\eta,t$ expression. The same argument with the crude strip estimate (8.1.7) proves normal convergence on compact subsets of $-1/4<\Re v<1/4$, and hence $\mathcal E_{\mathrm{reg}}(v)$ is holomorphic there. We conclude that
	\begin{align*}
		(OD)_E
		={}&K\left(\frac{2K}{\pi}\right)^{-2v}\widetilde\Phi(-2v)
		\frac{L_0(1-2v)\zeta(1+2u-2v)\zeta(1-2u-2v)}
		{\zeta(2-4v)}
		\\
		&-\frac K2\left(\frac{2K}{\pi}\right)^{-2v}\widetilde\Phi(-2v)H_u(-v)
		\frac{L_0(1-2v)\zeta(1-4v)}{\zeta(2-4v)}
		+O_{\Phi,\epsilon}\left(K^{\frac12+\epsilon}\right).
		\tag*{(8.2.6)}
	\end{align*}
	
	\subsection{Maa\ss{} spectrum}
	
	We prove that
	\[
	(OD)_M\ll_{\Phi,\epsilon}K^{\frac12+\epsilon}.
	\tag*{(8.3.1)}
	\]
	Choose a real orthonormal basis $\{u_j\}$ that simultaneously diagonalizes the Laplace operator, the Hecke operators, and the reflection operator. For
	$\mathrm{SL}_2(\mathbb Z)$, the cuspidal spectrum has no exceptional eigenvalues
	\cite[p.~261]{DeshouillersIwaniec}, so $t_j\geq0$ is real. Odd forms take the
	value $0$ at $i$, and hence only even forms need to be considered.
	
	For even forms, let $\lambda_j(m)=\rho_j(m)/\rho_j(1)$. By (8.1.4), for $\Re v>1/2$ we have, with absolute convergence,
	\begin{align*}
		(OD)_M
		={}&\frac12\sum_{j\ \mathrm{even}}\overline{\rho_j(1)}u_j(i)
		\sum_{h\geq1}h^{-\frac12-2v}
		\sum_{\eta\neq0}\mathscr B_{h\eta}(t_j)
		\sum_{q\geq1}\frac{\lambda_j(q|\eta|)}{q^{\frac12+2v}},
		\\
		\sum_{q\geq1}\frac{\lambda_j(q|\eta|)}{q^s}
		&=L(s,u_j)\prod_{p^a\parallel|\eta|}
		\left(\lambda_j(p^a)-p^{-s}\lambda_j(p^{a-1})\right).
		\tag*{(8.3.2)}
	\end{align*}
	The second identity is first obtained from the Hecke relations for $\Re s>3/2$. The trivial Hecke bound
	$|\lambda_j(m)|\leq d(m)m^{1/2}$ ensures absolute convergence and justifies rearranging the Euler product in this region.
	
	In the target region, define
	\begin{align*}
		(OD)_M
		={}&\frac12\sum_{j\ \mathrm{even}}
		\overline{\rho_j(1)}u_j(i)L\left(\frac12+2v,u_j\right)
		\sum_{h\geq1}h^{-\frac12-2v}
		\sum_{\eta\neq0}
		\prod_{p^a\parallel|\eta|}
		\left(
		\lambda_j(p^a)-p^{-\frac12-2v}\lambda_j(p^{a-1})
		\right)
		\mathscr B_{h\eta}(t_j).
		\tag*{(8.3.3)}
	\end{align*}
	This definition does not amount to replacing the divergent $q$-series term by term with an $L$-value. By the integral representation and the crude strip estimate (8.1.7), each term in (8.3.3) with fixed $j,h,\eta$ is holomorphic in $v$. By the trivial Hecke bound, for every $\Omega\Subset\{v:-1/8<\Re v<3/4\}$, we have
	\[
	\prod_{p^a\parallel m}
	\left|\lambda_j(p^a)-p^{-\frac12-2v}\lambda_j(p^{a-1})\right|
	\ll_{\Omega,\epsilon}m^{\frac12+\epsilon}.
	\]
	Combining the crude strip estimate (8.1.7), the local polynomial growth of $L(\frac12+2v,u_j)$, the standard polynomial bounds for Fourier coefficients and fixed-point values, and the local Weyl law, we find that (8.3.3) converges normally on compact subsets of this strip. It agrees with (8.3.2) for $1/2<\Re v<3/4$ and therefore, by the identity theorem, gives the continuation throughout the entire strip.
	
	In the target shift range, (8.1.5) and the trivial bound above further give
	\[
	\sum_{h\geq1}h^{-\frac12-2v}
	\sum_{\eta\neq0}
	\prod_{p^a\parallel|\eta|}
	\left(
	\lambda_j(p^a)-p^{-\frac12-2v}\lambda_j(p^{a-1})
	\right)
	\mathscr B_{h\eta}(t_j)
	\ll_{A,\Phi,\epsilon}
	K^{\frac12+\epsilon}e^{-\frac\pi2t_j}(1+t_j)^{-A}.
	\tag*{(8.3.4)}
	\]
	On the other hand, there is a fixed constant $B_0>0$ such that the
	Fourier expansion, the convexity bound in terms of the analytic conductor,
	and the local Weyl law at a fixed point
	\cite[Chapter 7]{IwaniecSpectral} give
	\begin{align*}
		e^{-\frac\pi2t_j}|\rho_j(1)|&\ll(1+t_j)^{B_0},
		\\
		L\left(\frac12+2v,u_j\right)&\ll_\epsilon K^\epsilon(1+t_j)^{\frac12+\epsilon},
		\\
		\sum_{T\leq t_j<2T}|u_j(i)|&\ll T^2,
		\qquad T\geq1.
		\tag*{(8.3.5)}
	\end{align*}
	Here the first estimate follows from the $L^2$ normalization and the Fourier expansion, while the third follows from the local Weyl law, the usual Weyl law, and the Cauchy--Schwarz inequality. The finite contribution from $t_j<1$ is absorbed directly. Decomposing the remaining spectral parameters dyadically,
	\begin{align*}
		|(OD)_M|
		&\ll K^{\frac12+\epsilon}
		\sum_{T\ \mathrm{dyadic}}(1+T)^{-A+B_0+\frac12+\epsilon}
		\sum_{T\leq t_j<2T}|u_j(i)|
		\\
		&\ll K^{\frac12+\epsilon}
		\sum_{T\ \mathrm{dyadic}}(1+T)^{-A+B_0+\frac52+\epsilon}
		\ll K^{\frac12+\epsilon},
		\tag*{(8.3.6)}
	\end{align*}
	provided that $A>B_0+4$. This proves (8.3.1).
	
	\subsection{Combining the main terms}
	
	By (5.2), (7.1), and (8.2.6), the two pairs of terms containing $H_u(v)$ and $H_u(-v)$ cancel exactly, respectively. Combining this with (4.9), (6.15), (8.1.3), and (8.3.1), we obtain
	\begin{align*}
		\mathcal S(u,v,K,\Phi)
		={}&K\left(\frac{2K}{\pi}\right)^{2u}\widetilde\Phi(2u)
		\frac{\zeta(1+2v+2u)\zeta(1-2v+2u)L_0(1+2u)}
		{\zeta(2+4u)}
		\\
		&+K\left(\frac{2K}{\pi}\right)^{-2u}\widetilde\Phi(-2u)
		\frac{\zeta(1+2v-2u)\zeta(1-2v-2u)L_0(1-2u)}
		{\zeta(2-4u)}
		\\
		&+K\left(\frac{2K}{\pi}\right)^{2v}\widetilde\Phi(2v)
		\frac{L_0(1+2v)\zeta(1+2u+2v)\zeta(1-2u+2v)}
		{\zeta(2+4v)}
		\\
		&+K\left(\frac{2K}{\pi}\right)^{-2v}\widetilde\Phi(-2v)
		\frac{L_0(1-2v)\zeta(1+2u-2v)\zeta(1-2u-2v)}
		{\zeta(2-4v)}
		\\
		&+O_{\Phi,\epsilon}\left(K^{\frac12+\epsilon}\right).
		\tag*{(8.4.1)}
	\end{align*}
	This proves (1.2), and the derivation in Section 1 then yields Corollary 1.1.

	\section{A conjecture on twisted shifted second moments}
	
	Let $\mathfrak l$ be a fixed nonzero integral ideal of $\mathbb Z[i]$, and
	put
	\[
	\alpha=u+v,\qquad \beta=u-v.
	\]
	We first describe the arithmetic factor in a form that does not require a
	choice of generator for $\mathfrak l$. For complex $\gamma$ and $\delta$,
	define the generalized divisor function on nonzero integral ideals by
	\[
	\tau_{\gamma,\delta}(\mathfrak n)
	=
	\sum_{\mathfrak a\mathfrak b=\mathfrak n}
	N(\mathfrak a)^{-\gamma}N(\mathfrak b)^{-\delta}.
	\tag*{(9.1)}
	\]
	This is a multiplicative function of $\mathfrak n$. In the domain of
	absolute convergence it gives
	\[
	L_k\left(\frac12+\gamma\right)
	L_k\left(\frac12+\delta\right)
	=
	\sum_{\mathfrak n\in\mathrm{Id}^{*}}
	\frac{\tau_{\gamma,\delta}(\mathfrak n)
		\lambda(\mathfrak n)^k}{N(\mathfrak n)^{1/2}}.
	\tag*{(9.2)}
	\]
	The diagonal arithmetic factor associated with $\mathfrak l$ is
	\[
	\mathcal A_{\mathfrak l}(\gamma,\delta)
	=
	\sum_{\substack{\mathfrak n\in\mathrm{Id}^{*}\\
			\mathfrak l\mathfrak n=(m)\text{ for some }m\geq1}}
	\frac{\tau_{\gamma,\delta}(\mathfrak n)}{N(\mathfrak n)^{1/2}}.
	\tag*{(9.3)}
	\]
	The series is initially defined for $\Re\gamma,\Re\delta>1/2$ and is
	understood near the origin by meromorphic continuation. Since every ideal
	of $\mathbb Z[i]$ is principal,
	$\lambda(\mathfrak l\mathfrak n)=1$ if and only if
	$\mathfrak l\mathfrak n=(m)$ for a positive rational integer $m$.
	
	We next separate the universal Euler product from the finite contribution
	of the twist. Write
	\[
	\mathfrak l=(q_{\mathfrak l})\mathfrak l^{\circ},
	\qquad q_{\mathfrak l}\geq1,
	\]
	where $(q_{\mathfrak l})$ is the largest ideal generated by a positive
	rational integer that divides $\mathfrak l$. Thus, if
	$(p)=\mathfrak p\overline{\mathfrak p}$ is split, then
	$\mathfrak l^{\circ}$ contains only the excess of one of the two prime-ideal
	powers; no inert prime divides $\mathfrak l^{\circ}$; and
	$\operatorname{ord}_{(1+i)}(\mathfrak l^{\circ})$ is either $0$ or $1$.
	In particular,
	$\lambda(\mathfrak l)=\lambda(\mathfrak l^{\circ})$.
	
	For every prime-ideal power
	$\mathfrak p^d\parallel\mathfrak l^{\circ}$, define
	\begin{align*}
		\mathcal P_{\mathfrak p^d}(\gamma,\delta)
		={}&
		\frac{N(\mathfrak p)^{-d/2}}
		{1+N(\mathfrak p)^{-1-\gamma-\delta}}
		\bigg(
		\tau_{\gamma,\delta}(\mathfrak p^d)
		-N(\mathfrak p)^{-1-2\gamma-2\delta}
		\tau_{\gamma,\delta}(\mathfrak p^{d-2})
		\bigg),
		\\
		\mathcal P_{\mathfrak l}(\gamma,\delta)
		={}&
		\prod_{\mathfrak p^d\parallel\mathfrak l^{\circ}}
		\mathcal P_{\mathfrak p^d}(\gamma,\delta),
		\tag*{(9.4)}
	\end{align*}
	where
	$\tau_{\gamma,\delta}(\mathfrak p^j)=0$ for $j<0$. Then the Euler product
	in (9.3) simplifies to
	\[
	\mathcal A_{\mathfrak l}(\gamma,\delta)
	=
	\frac{\zeta(1+2\gamma)\zeta(1+2\delta)
		L_0(1+\gamma+\delta)}
	{\zeta(2+2\gamma+2\delta)}
	\mathcal P_{\mathfrak l}(\gamma,\delta).
	\tag*{(9.5)}
	\]
	Indeed, at a split rational prime the local condition in (9.3) depends
	only on the difference $d$ between the exponents of
	$\mathfrak p$ and $\overline{\mathfrak p}$. If
	$x=p^{-1/2-\gamma}$, $y=p^{-1/2-\delta}$, and
	$h_m(x,y)=\sum_{a+b=m}x^ay^b$, direct summation gives
	\[
	\frac{\displaystyle\sum_{m\geq0}h_m(x,y)h_{m+d}(x,y)}
	{\displaystyle\sum_{m\geq0}h_m(x,y)^2}
	=
	\frac{h_d(x,y)-(xy)^2h_{d-2}(x,y)}{1+xy},
	\qquad d\geq1,
	\]
	with $h_{-1}=0$. This is exactly the first line of (9.4). The same formula
	with $d=1$ gives the correction at the ramified prime. Inert primes and
	prime-ideal powers occurring equally with their conjugates contribute only
	to the universal factor in (9.5).
	
\begin{conjecture}
	Suppose that $\mathfrak l$ is fixed and that $u,v$ satisfy the shift
	conditions in Theorem 1.1. Then, for every $\epsilon>0$,
	\begin{align*}
		&\sum_{k\geq1}\Phi\left(\frac{k}{K}\right)
		\lambda(\mathfrak l)^k
		L_k\left(\frac12+\alpha\right)
		L_k\left(\frac12+\beta\right)
		\\
		={}&K\widetilde\Phi(0)
		\frac{\zeta(1+2\alpha)\zeta(1+2\beta)
			L_0(1+\alpha+\beta)}
		{\zeta(2+2\alpha+2\beta)}
		\mathcal P_{\mathfrak l}(\alpha,\beta)
		\\
		&+K\left(\frac{2K}{\pi}\right)^{-2\alpha}
		\widetilde\Phi(-2\alpha)
		\frac{\zeta(1-2\alpha)\zeta(1+2\beta)
			L_0(1-\alpha+\beta)}
		{\zeta(2-2\alpha+2\beta)}
		\mathcal P_{\mathfrak l}(-\alpha,\beta)
		\\
		&+K\left(\frac{2K}{\pi}\right)^{-2\beta}
		\widetilde\Phi(-2\beta)
		\frac{\zeta(1+2\alpha)\zeta(1-2\beta)
			L_0(1+\alpha-\beta)}
		{\zeta(2+2\alpha-2\beta)}
		\mathcal P_{\mathfrak l}(\alpha,-\beta)
		\\
		&+K\left(\frac{2K}{\pi}\right)^{-2\alpha-2\beta}
		\widetilde\Phi(-2\alpha-2\beta)
		\frac{\zeta(1-2\alpha)\zeta(1-2\beta)
			L_0(1-\alpha-\beta)}
		{\zeta(2-2\alpha-2\beta)}
		\mathcal P_{\mathfrak l}(-\alpha,-\beta)
		\\
		&+O_{\mathfrak l,\Phi,\epsilon}
		\left(K^{\frac12+\epsilon}\right).
		\tag*{(9.6)}
	\end{align*}
\end{conjecture}

	At shift values where individual terms in (9.6) have poles, their sum is
	interpreted by meromorphic continuation; the polar parts are expected to
	cancel, since the moment on the left is holomorphic in the shifts.
	
	We give the heuristic for the four terms. By (9.2), the smooth $k$-average
	of the no-swap Dirichlet series contains
	$\lambda(\mathfrak l\mathfrak n)^k$. Its exact angular diagonal is
	$\lambda(\mathfrak l\mathfrak n)=1$, and hence (9.3) gives the first term
	of (9.6). The functional equation in the form
	\[
	L_k\left(\frac12+\gamma\right)
	=X_k(\gamma)L_k\left(\frac12-\gamma\right),
	\qquad
	X_k(\gamma)
	=\pi^{2\gamma}
	\frac{\Gamma(2k+\frac12-\gamma)}
	{\Gamma(2k+\frac12+\gamma)}
	\sim\left(\frac{\pi}{2k}\right)^{2\gamma}
	\tag*{(9.7)}
	\]
	allows neither factor, only the first factor, only the second factor, or
	both factors to be replaced by their dual Dirichlet series. These four
	choices replace $(\alpha,\beta)$ respectively by
	\[
	(\alpha,\beta),\qquad(-\alpha,\beta),\qquad
	(\alpha,-\beta),\qquad(-\alpha,-\beta).
	\]
	Stirling's formula in (9.7), followed by the smooth $k$-average, gives the
	corresponding powers of $2K/\pi$ and the four Mellin transforms in (9.6).
	Thus (9.6) is precisely the four-swap prediction with the
	$\mathfrak l$-dependent diagonal Euler factors retained.
	
	There are two useful consistency checks. First, when
	$\mathfrak l=(1)$, one has $\mathfrak l^{\circ}=(1)$ and
	$\mathcal P_{\mathfrak l}=1$, so (9.5) turns (9.6) exactly into Corollary
	1.1. Second, multiplying $\mathfrak l$ by an ideal generated by a rational
	integer changes neither the twist nor $\mathcal P_{\mathfrak l}$. Finally,
	for fixed $\mathfrak l$ one expects the off-diagonal to admit the same type
	of spectral treatment as in Lemma 2.2, equivalently through finitely many
	Hecke translates of the relevant incomplete Poincar\'e series. The zero
	frequency and the moving Eisenstein residues should reproduce the swapped
	terms, while the remaining Eisenstein integral and the Maa\ss{} spectrum
	should have square-root size. This explains the conjectural error term in
	(9.6); proving the required twisted spectral continuation and the uniform
	local estimates is the additional work not carried out here.

	\section{Declaration of generative AI and AI-assisted technologies in the manuscript preparation process}
	
	During the preparation of this work, the author used ChatGPT5.6 Sol (OpenAI) to translate portions of an original Chinese draft into English, assist with typesetting and formatting, and provide suggestions concerning language, stylistic and rhetorical refinement, exposition, and organization. The author reviewed and revised the AI-assisted output and takes full responsibility for the content of the manuscript.

\end{document}